\documentclass{article}
\usepackage{graphicx}
\usepackage{comment}
\usepackage{amsmath}
\usepackage{amsthm}
\usepackage{amssymb}
\usepackage[colorlinks=true,linkcolor=blue,citecolor=green]{hyperref}
\usepackage[margin=2.5cm]{geometry}
\usepackage{stmaryrd}
\usepackage{wasysym}
\usepackage{bussproofs}
\usepackage{mathtools}
\usepackage[all]{xy}
\usepackage{bussproofs}
\usepackage{mathrsfs}
\usepackage{forest}
\forestset{smullyan tableaux/.style={for tree={math content},where n children=1{!1.before computing xy={l=\baselineskip},!1.no edge}{},closed/.style={label=below:$\times$},},}
\newtheorem{theorem}{Theorem}[section]
\newtheorem{proposition}{Proposition}[section]

\newtheorem{lemma}{Lemma}[section]

\newtheorem{sublemma}{Lemma}[lemma]
\newtheorem{sublemmacase}{Case}[sublemma]
\theoremstyle{definition}
\newtheorem{definition}{Definition}[section]
\theoremstyle{remark}

\usepackage{longtable}
\usepackage{fancyhdr}
\pagestyle{fancy}
\usepackage{authblk}
\usepackage{soul}
\begin{document}
\providecommand{\keywords}[1]
{
  \small	
  \textbf{\textit{Keywords: }} #1
}
\title{Non-distributive relatives of ETL and NFL\thanks{The author would like to express his gratitude to his doctoral advisor professor Dmitryi Zaitsev for his fruitful advice and help as well as to professors Heinrich Wansing and Omori Hitoshi for their discussion of the earlier version of these results and kind permission on behalf of Prof.\ Wansing to present the earlier version on a~session of the research colloquium at Ruhr-Universit\"{a}t Bochum. The author also wishes to thank two anonymous referees whose detailed comments helped to enhance the quality of the paper.\\The work was partially supported by RUB Research School scholarship ‘Ph.D.-Exchange’.\\This is a~postprint version of the following paper --- \href{https://doi.org/10.1007/s11225-020-09904-3}{doi: 10.1007/s11225-020-09904-3}.}}
\author{Daniil Kozhemiachenko}
\affil{Department of Logic, Faculty of Philosophy, Lomonosov Moscow State University}
\date{}
\maketitle
\begin{abstract}
In this paper, we devise non-distributive relatives of Exactly True Logic (ETL) by Pietz and Riveccio and its dual (NFL) Non-Falsity Logic by Shramko, Zaitsev and Belikov. We consider two pre-orders which are algebraic counterparts of the ETL’s and NFL’s entailment relations on the De Morgan lattice \textbf{4}. We generalise these pre-orders and determine which distributive properties that hold on \textbf{4} are not forced by either of the pre-orders. We then construct relatives of ETL and NFL but lack such distributive properties. For these logics, we also devise a truth table semantics which uses non-distributive lattice \textbf{M3} as their lattice of truth values. We also provide analytic tableaux systems which work with sequents of the form $\phi\vdash\chi$. We also prove correctness and completeness results for these proof systems and provide a neat generalisation for non-distributive ETL- and NFL-like logics built over a certain family of non-distributive modular lattices.

\keywords{Exactly True Logic; Non-Falsity Logic; non-distributive lattices; analytic tab\-leaux; ETL-like logic; NFL-like logic.}
\end{abstract}
\section{Introduction\label{introduction}}
\subsection{FDE and its relatives}\label{FDE}
J.M.\ Dunn and N.D.\ Belnap proposed their ‘useful four-valued logic’ (or FDE --- first-degree entailment) back in the 1970s in~\cite{Dunn1976,Belnap1977computer,Belnap1977fourvalued}.
	
Dunn formulated FDE in~\cite{Dunn2000} as a bi-consequence system $\mathbf{R}_{\mathrm{fde}}$ which uses sequents of the form $\phi\vdash_{\mathrm{FDE}}\chi$ (these are binary consequences) where $\phi$ and $\chi$ are propositional formulas over $\{\neg,\wedge,\vee\}$. $\mathbf{R}_{\mathrm{fde}}$ is the reflexive transitive closure of the following principles.
\[\begin{array}{cc}
\phi\wedge\chi\vdash_{\mathrm{FDE}}\phi&\phi\vdash_{\mathrm{FDE}}\phi\vee\chi\\
\phi\wedge\chi\vdash_{\mathrm{FDE}}\chi&\chi\vdash_{\mathrm{FDE}}\phi\vee\chi\\
\neg(\phi\wedge\chi)\dashv\vdash_{\mathrm{FDE}}\neg\phi\vee\neg\chi&\neg(\phi\vee\chi)\dashv\vdash_{\mathrm{FDE}}\neg\phi\wedge\neg\chi\\
\end{array}\]
\[\begin{array}{c}
\neg\neg\phi\dashv\vdash_{\mathrm{FDE}}\phi\\
\phi\wedge(\chi\vee\psi)\vdash_{\mathrm{FDE}}(\phi\wedge\chi)\vee(\phi\wedge\psi)
\end{array}\]
\[\begin{array}{cc}
\wedge_i\dfrac{\phi\vdash_{\mathrm{FDE}}\chi\quad\phi\vdash_{\mathrm{FDE}}\psi}{\phi\vdash_{\mathrm{FDE}}\chi\wedge\psi}&\vee_e\dfrac{\phi\vdash_{\mathrm{FDE}}\psi\quad\chi\vdash_{\mathrm{FDE}}\psi}{\phi\vee\chi\vdash_{\mathrm{FDE}}\psi}
\end{array}\]

First degree entailment is known to be complete w.r.t.\ the following semantics built upon four values: \textbf{T}~(true and not false), \textbf{B} (both true and false), \textbf{N} (neither true, nor false), and \textbf{F} (false and not true)
\begin{center}
\begin{tabular}{ccc}
\begin{tabular}{c|c}
$\neg$&\\\hline
$\mathbf{T}$&$\mathbf{F}$\\
$\mathbf{B}$&$\mathbf{B}$\\
$\mathbf{N}$&$\mathbf{N}$\\
$\mathbf{F}$&$\mathbf{T}$\\
\end{tabular}
&
\begin{tabular}{c|cccc}
$\wedge$&$\mathbf{T}$&$\mathbf{B}$&$\mathbf{N}$&$\mathbf{F}$\\\hline
$\mathbf{T}$&$\mathbf{T}$&$\mathbf{B}$&$\mathbf{N}$&$\mathbf{F}$\\
$\mathbf{B}$&$\mathbf{B}$&$\mathbf{B}$&$\mathbf{F}$&$\mathbf{F}$\\
$\mathbf{N}$&$\mathbf{N}$&$\mathbf{F}$&$\mathbf{N}$&$\mathbf{F}$\\
$\mathbf{F}$&$\mathbf{F}$&$\mathbf{F}$&$\mathbf{F}$&$\mathbf{F}$\\
\end{tabular}
&
\begin{tabular}{c|cccc}
$\vee$&$\mathbf{T}$&$\mathbf{B}$&$\mathbf{N}$&$\mathbf{F}$\\\hline
$\mathbf{T}$&$\mathbf{T}$&$\mathbf{T}$&$\mathbf{T}$&$\mathbf{T}$\\
$\mathbf{B}$&$\mathbf{T}$&$\mathbf{B}$&$\mathbf{T}$&$\mathbf{B}$\\
$\mathbf{N}$&$\mathbf{T}$&$\mathbf{T}$&$\mathbf{N}$&$\mathbf{N}$\\
$\mathbf{F}$&$\mathbf{T}$&$\mathbf{B}$&$\mathbf{N}$&$\mathbf{F}$\\
\end{tabular}
\end{tabular}
\end{center}
where entailment relation can be defined as ‘at least truth’ preservation
$$\phi\vDash_{\mathrm{FDE}}\chi\leftrightharpoons\forall v:v(\phi)\in\{\mathbf{T},\mathbf{B}\}\Rightarrow v(\chi)\in\{\mathbf{T},\mathbf{B}\}$$
	
Recently, a new approach of providing relatives to FDE was proposed first by Pietz and Riveccio in~\cite{PietzRiveccio2013} and then by Shramko, Zaitsev and Belikov in~\cite{ShramkoZaitsevBelikov2017,ShramkoZaitsevBelikov2018}. The point was to change the set of the designated values but retain the definitions of connectives. In this fashion ‘Exactly True Logic’ (ETL) by Pietz and Riveccio~\cite{PietzRiveccio2013} is the logic whose entailment relation is ‘truth and non-falsity’ preservation
$$\phi\vDash_{\mathrm{ETL}}\chi\leftrightharpoons\forall v:v(\phi)=\mathbf{T}\Rightarrow v(\chi)=\mathbf{T}$$
and ‘Non-Falsity Logic’ (NFL) by Shramko et al.~\cite{ShramkoZaitsevBelikov2017,ShramkoZaitsevBelikov2018} is the logic whose entailment relation is ‘non-falsity’ preservation
$$\phi\vDash_{\mathrm{NFL}}\chi\leftrightharpoons\forall v:v(\phi)\neq\mathbf{F}\Rightarrow v(\chi)\neq\mathbf{F}$$
	
Roughly speaking, ETL can be acquired if we replace the disjunction elimination rule --- $\vee_e$ --- of $\mathbf{R}_{\mathrm{fde}}$ with the disjunctive syllogism: $$\mathbf{DS}:\neg\phi\wedge(\phi\vee\chi)\vdash\chi$$ The resulting system is formulated in a binary consequence\footnote{For alternative proof system for ETL cf.~\cite{WinteinMuskens2016}.} form in~\cite{ShramkoZaitsevBelikov2018} as follows
\[\begin{array}{cc}
\phi\wedge\chi\vdash_{\mathrm{ETL}}\phi&\phi\vee(\chi\vee\psi)\vdash_{\mathrm{ETL}}(\phi\vee\chi)\vee\psi\\
\phi\wedge\chi\vdash_{\mathrm{ETL}}\chi&\phi\vee(\chi\wedge\psi)\dashv\vdash_{\mathrm{ETL}}(\phi\vee\chi)\wedge(\phi\vee\psi)\\
\phi\vdash_{\mathrm{ETL}}\phi\vee\chi&\phi\vee\chi\dashv\vdash_{\mathrm{ETL}}\neg\neg\phi\vee\chi\\
\phi\vee\chi\vdash_{\mathrm{ETL}}\chi\vee\phi&\neg(\phi\wedge\chi)\vee\psi\dashv\vdash_{\mathrm{ETL}}(\neg\phi\vee\neg\chi)\vee\psi\\
\phi\vee\phi\vdash_{\mathrm{ETL}}\phi&\neg(\phi\vee\chi)\vee\psi\dashv\vdash_{\mathrm{ETL}}(\neg\phi\wedge\neg\chi)\vee\psi\\
&\phi\wedge(\neg\phi\vee\chi)\vdash_{\mathrm{ETL}}\chi
\end{array}\]
\[\begin{array}{cc}
\mathsf{cut}\dfrac{\phi\vdash_{\mathrm{ETL}}\chi\quad\chi\vdash_{\mathrm{ETL}}\psi}{\phi\vdash_{\mathrm{ETL}}\psi}&\wedge_i\dfrac{\phi\vdash_{\mathrm{ETL}}\chi\quad\phi\vdash_{\mathrm{ETL}}\psi}{\phi\vdash_{\mathrm{ETL}}\chi\wedge\psi}
\end{array}\]

In the same fashion, we can speak of NFL as the result of replacing the conjunction introduction rule --- $\wedge_i$ --- of $\mathbf{R}_{\mathrm{fde}}$ with the dual disjunctive syllogism: $$\mathbf{DDS}:\phi\vdash\neg\chi\vee(\chi\wedge\phi)$$ The resulting binary consequence system is as follows~\cite{ShramkoZaitsevBelikov2018}:
\[\begin{array}{cc}
\phi\vdash_{\mathrm{NFL}}\phi\vee\chi&(\phi\wedge\chi)\wedge\psi\vdash_{\mathrm{NFL}}\phi\wedge(\chi\wedge\psi)\\
\chi\vdash_{\mathrm{NFL}}\phi\vee\chi&\phi\wedge(\chi\vee\psi)\dashv\vdash_{\mathrm{NFL}}(\phi\wedge\chi)\vee(\phi\wedge\psi)\\
\phi\wedge\chi\vdash_{\mathrm{NFL}}\phi&\phi\wedge\chi\dashv\vdash_{\mathrm{NFL}}\neg\neg\phi\wedge\chi\\
\phi\wedge\chi\vdash_{\mathrm{NFL}}\chi\wedge\phi&\neg(\phi\wedge\chi)\wedge\psi\dashv\vdash_{\mathrm{NFL}}(\neg\phi\vee\neg\chi)\wedge\psi\\
\phi\vdash_{\mathrm{NFL}}\phi\wedge\phi&\neg(\phi\vee\chi)\wedge\psi\dashv\vdash_{\mathrm{NFL}}(\neg\phi\wedge\neg\chi)\wedge\psi\\
&\phi\vdash_{\mathrm{NFL}}\neg\chi\vee(\chi\wedge\phi)\\
\end{array}\]
\[\begin{array}{cc}
\mathsf{cut}\dfrac{\phi\vdash_{\mathrm{NFL}}\chi\quad\chi\vdash_{\mathrm{NFL}}\psi}{\phi\vdash_{\mathrm{NFL}}\psi}&\vee_e\dfrac{\phi\vdash_{\mathrm{NFL}}\psi\quad\chi\vdash_{\mathrm{NFL}}\psi}{\phi\vee\chi\vdash_{\mathrm{NFL}}\psi}
\end{array}\]
\subsection{Non-distributive logics}\label{LN}
The second source of our motivation is threefold. First, it is a non-dis\-tri\-bu\-ti\-ve logic $LN$ proposed in~\cite{BimboDunn2001}. $LN$~is a~cut-free sequent calculus which constitutes the positive fragment of FDE that proves neither $$\phi\wedge(\chi\vee\psi)\vdash(\phi\wedge\chi)\vee(\phi\wedge\psi)\text{ and }(\phi\vee\chi)\wedge(\phi\vee\psi)\vdash\phi\vee(\chi\wedge\psi)$$ nor $$\phi\wedge(\chi\vee\psi)\vdash(\phi\wedge\chi)\vee\psi$$
As pointed out in~\cite{BimboDunn2001}, $LN$ is a~natural calculus since it can be obtained by simply removing structural rules but retaining the usual introduction and elimination rules for conjunction and disjunction.
	
Second, observe that ETL and NFL can be construed as versions of Priest's logic of paradox from~\cite{Priest1979} and Kleene's strong three-valued logics with restriction on conjunction and disjunction in the following sense. ETL lacks disjunction elimination $$\dfrac{\phi\vdash\psi\quad\chi\vdash\psi}{\phi\vee\chi\vdash\psi}$$ while NFL lacks conjunction introduction $$\dfrac{\phi\vdash\chi\quad\phi\vdash\psi}{\phi\vdash\chi\wedge\psi}$$
The lack of distributive properties listed above usually places additional restrictions on disjunction and conjunction, so it is natural to ask what happens if we apply these restrictions.
	
Third, non-distributive lattices appear in the study of quantum logic (cf., e.g.~\cite{HandbookofQuantumLogicandQuantumStructures2009} for reference). These are orthomodular. Observe, however, that \textbf{4} equipped with the De Morgan negation is distributive and hence modular but is \textit{not} orthomodular.
\[\xymatrix{
&\mathbf{T}&\\
\mathbf{B}\ar@{-}[ur]&&\mathbf{N}\ar@{-}[ul]\\
&\mathbf{F}\ar@{-}[ur]\ar@{-}[ul]&
}\]
It thus would be interesting to consider modular non-distributive lattices and ETL and NFL logics on them.
\subsection{Entailment as lattice pre-order}\label{preorders}
Recall that truth values for first-degree entailment, Exactly True logic and Non-Falsity Logic form a De Morgan lattice \textbf{4}.
	
It is also straightforward to see that three different entailment relations: $\vDash_{\mathrm{FDE}}$, $\vDash_{\mathrm{ETL}}$ and $\vDash_{\mathrm{NFL}}$ constitute three different pre-orders in \textbf{4}\footnote{Note, however, that $\vDash_{\mathrm{FDE}}$ can be defined as an order on \textbf{4}, as shown by Font in~\cite{Font1997}.}. Moreover, the following distributive properties hold for entailment relations of FDE, ETL and NFL.
\begin{itemize}
\item $\phi\wedge(\chi\vee\psi)\vDash(\phi\wedge\chi)\vee(\phi\wedge\psi)$.
\item $(\phi\wedge\chi)\vee(\phi\wedge\psi)\vDash\phi\wedge(\chi\vee\psi)$.
\item $\phi\vee(\chi\wedge\psi)\vDash(\phi\vee\chi)\wedge(\phi\vee\psi)$.
\item $(\phi\vee\chi)\wedge(\phi\vee\psi)\vDash\phi\vee(\chi\wedge\psi)$.
\item $\phi\wedge(\chi\vee\psi)\vDash(\phi\wedge\chi)\vee\psi$.
\item $(\phi\vee\chi)\wedge\psi\vDash\phi\vee(\chi\wedge\psi)$.
\end{itemize}
	
A reasonable question may be asked what happens to these when we generalise $\vDash_{\mathrm{ETL}}$ and $\vDash_{\mathrm{NFL}}$ to all bounded lattices.
\subsection{Plan of the paper}\label{plan}
The text is organised as follows. In \S\ref{distributivitypreorder}, we define properties of ETL- and NFL-like logics. We then introduce generalised versions of the $\vDash_{\mathrm{ETL}}$ and $\vDash_{\mathrm{NFL}}$ entailment relations and investigate which distributive properties they force. Then in~\S\ref{truthtables} we introduce a truth table semantics for ETL and NFL relatives $\mathrm{ETL}_\mathbf{M3}$ and $\mathrm{NFL}_{\mathbf{M3}}$ based upon \textbf{M3} lattice, and also present a family of $\mathbf{Mn}$ lattices who differ from \textbf{M3} in their number of midpoints. In~\S\ref{nDETLproofsystem} we construct a unified analytic tableaux calculus for $\mathrm{ETL}_\mathbf{M3}$ and $\mathrm{NFL}_{\mathbf{M3}}$, and also provide motivation as well as correctness and completeness proofs (Theorems~\ref{M3correctness}--\ref{M3NFLcompleteness}) for these systems. In~\S\ref{generalisation} we generalise our result to a~family of ETL- and NFL-like logics built upon $\mathbf{Mn}$ lattices (Theorems~\ref{ETLMncompleteness}--\ref{NFLMncompleteness}); we also define $\mathrm{ETL}_{\mathbf{M}\omega}$ and $\mathrm{NFL}_{\mathbf{M}\omega}$ --- the ETL- and NFL-like logics over $\mathbf{M}\omega$ ($\mathbf{Mn}$ lattice with $\omega$ midpoints), and provide tableaux for them; we then show that $\mathrm{ETL}_{\mathbf{M}\omega}$ and $\mathrm{NFL}_{\mathbf{M}\omega}$ are ETL- and NFL-like logics of all bounded lattices (Theorems~\ref{ETLomega} and~\ref{NFLomega}). Finally, we wrap up our work and set goals for future research in~\S\ref{conclusion}.
\section[Distributive properties and pre-orders]{Distributive properties and pre-orders on bounded lattices}\label{distributivitypreorder}
For any bounded lattice $\mathfrak{L}=\langle L,\wedge,\vee,\neg,\top,\bot\rangle$ we have two matrices --- $\mathrm{ETL}_{\mathfrak{L}}=\langle\mathfrak{L},\{\top\}\rangle$ and $\mathrm{NFL}_{\mathfrak{L}}=\langle\mathfrak{L},L\setminus\{\bot\}\rangle$ --- that generalise entailment relations $\vDash_{\mathrm{ETL}}$ and $\vDash_{\mathrm{NFL}}$. Note that the entailment relations on $\mathrm{ETL}_{\mathfrak{L}}$ and $\mathrm{NFL}_{\mathfrak{L}}$ which we will further designate $\vDash_{\mathrm{ETL}_{\mathfrak{L}}}$ and $\vDash_{\mathrm{NFL}_{\mathfrak{L}}}$, respectively, constitute pre-orders on $\mathfrak{L}$.

Note, however, that the lattice $\mathbf{4}$ has some additional properties: first and foremost, De Morgan as well as double negation laws hold while negation is not lattice complement. With this in mind, let us now also generalise exactly true and Non-Falsity Logics.
\begin{definition}[ETL- and NFL-like logics]\label{ETLNFLlike}
For any bounded lattice $\mathfrak{L}$ and matrices $\mathrm{ETL}_{\mathfrak{L}}$ and $\mathrm{NFL}_{\mathfrak{L}}$ we will further call the logics of these matrices \textit{ETL-like logic on $\mathfrak{L}$} (\textit{NFL-like logic on $\mathfrak{L}$}, respectively) if the conditions listed below hold.
\begin{enumerate}
\item The logic enjoys the following version De Morgan laws ($v$ being a valuation in $\mathrm{ETL}_{\mathfrak{L}}$ or $\mathrm{NFL}_{\mathfrak{L}}$, respectively):
\[\forall v:v(\neg(\phi\wedge\chi))=v(\neg\phi\vee\neg\chi)\quad\forall v:v(\neg(\phi\vee\chi))=v(\neg\phi\wedge\neg\chi)\] and double negation law: \[\forall v:v(\neg\neg\phi)=v(\phi)\]
\item If $v(\phi\vee\neg\phi)=\top$, then $v(\phi)\in\{\top,\bot\}$.
\item $\phi\wedge\neg\phi\vDash_{\mathrm{ETL}_{\mathfrak{L}}}\chi$ and $\phi\vDash_{\mathrm{NFL}_{\mathfrak{L}}}\chi\vee\neg\chi$.
\item $\phi\wedge\neg\phi\nvDash_{\mathrm{NFL}_{\mathfrak{L}}}\chi$ and $\phi\nvDash_{\mathrm{ETL}_{\mathfrak{L}}}\chi\vee\neg\chi$.
\item $\neg\phi\wedge(\phi\vee\chi)\vDash_{\mathrm{ETL}_{\mathfrak{L}}}\chi$ and $\phi\vDash_{\mathrm{NFL}_{\mathfrak{L}}}\neg\chi\vee(\chi\wedge\phi)$.
\end{enumerate}
\end{definition}
Note also that $\vDash_{\mathrm{ETL}_{\mathfrak{L}}}$ and $\vDash_{\mathrm{NFL}_{\mathfrak{L}}}$ for the same lattice $\mathfrak{L}$ are completely dual, so we will further usually consider ETL-like logics in detail while mentioning NFL-like logics only briefly.

We now want to find out which distributive properties listed in section~\ref{preorders} are forced by either $\vDash_{\mathrm{ETL}_{\mathfrak{L}}}$ or $\vDash_{\mathrm{NFL}_{\mathfrak{L}}}$.
	
The following lemmas are easy to obtain.
\begin{lemma}\label{ETLdistributivity}
For any matrix $\mathrm{ETL}_{\mathfrak{L}}$ over a bounded lattice $\mathfrak{L}$ the following holds.
\begin{enumerate}
\item $\phi\wedge(\chi\vee\psi)\vDash_{\mathrm{ETL}_{\mathfrak{L}}}(\phi\wedge \chi)\vee(\phi\wedge\psi)$.
\item $(\phi\wedge \chi)\vee(\phi\wedge\psi)\vDash_{\mathrm{ETL}_{\mathfrak{L}}}\phi\wedge(\chi\vee\psi)$.
\item $\phi\vee(\chi\wedge\psi)\vDash_{\mathrm{ETL}_{\mathfrak{L}}}(\phi\vee \chi)\wedge(\phi\vee\psi)$.
\item $\phi\wedge(\chi\vee\psi)\vDash_{\mathrm{ETL}_{\mathfrak{L}}}(\phi\wedge \chi)\vee\psi$.
\item $(\phi\vee \chi)\wedge\psi\vDash_{\mathrm{ETL}_{\mathfrak{L}}}\phi\vee(\chi\wedge\psi)$.
\end{enumerate}
\end{lemma}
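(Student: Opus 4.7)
The proof plan is to unfold the definition of $\vDash_{\mathrm{ETL}_{\mathfrak{L}}}$ on each item: it suffices to show that whenever $v(\phi_{\text{LHS}}) = \top$, also $v(\phi_{\text{RHS}}) = \top$. I would rely on two elementary facts about any bounded lattice $\mathfrak{L}$: (a) $x \wedge y = \top$ if and only if $x = y = \top$, because $\top$ is the greatest element; and (b) the two ``weak distributive'' inequalities $(a\wedge b)\vee(a\wedge c)\leq a\wedge(b\vee c)$ and $a\vee(b\wedge c)\leq(a\vee b)\wedge(a\vee c)$ hold in every lattice unconditionally.

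For items (1), (4), and (5) I would use fact (a) to split the assumption about the LHS. For instance, in (1), if $v(\phi\wedge(\chi\vee\psi))=\top$ then $v(\phi)=\top$ and $v(\chi)\vee v(\psi)=\top$; substituting $v(\phi)=\top$ into the RHS gives $(v(\phi)\wedge v(\chi))\vee(v(\phi)\wedge v(\psi))=v(\chi)\vee v(\psi)=\top$. Items (4) and (5) are analogous: once one factor of the top-level meet is forced to be $\top$, absorbing it yields the desired equality. These arguments do not require any distributivity or modularity; they only use that $\top$ is neutral for $\wedge$ among pairs in which $\top$ appears, and that the top element is absorbed by $\vee$.

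For items (2) and (3) I would invoke fact (b) directly: whatever $v$ assigns, the value of the RHS is greater than or equal to the value of the LHS, so if $v(\text{LHS})=\top$ then $v(\text{RHS})\geq\top$, hence $v(\text{RHS})=\top$. This is essentially the observation that one half of each distributive law is a lattice identity, and \emph{that} half is the direction needed here.

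I do not expect any real obstacle; the content of the lemma is that the five inequalities survive the move from \textbf{4} to an arbitrary bounded lattice. The only point worth flagging is \emph{why} the missing distributive laws from Section~\ref{preorders} (such as $(\phi\wedge\chi)\vee\psi\vDash\phi\wedge(\chi\vee\psi)$, or the two ``strong'' halves $(\phi\vee\chi)\wedge(\phi\vee\psi)\vDash\phi\vee(\chi\wedge\psi)$ and the matching conjunctive dual) are absent from the list: in each of these one cannot conclude $v(\phi)=\top$ from $v(\text{LHS})=\top$ without genuine distributivity, so the argument breaks exactly where the lattice identity fails. I would mention this contrast briefly but not formalise it here, since the failures are presumably the content of a subsequent result.
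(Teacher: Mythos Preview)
Your proposal is correct. For items (1), (4), and (5) your argument coincides with the paper's: both use the fact that $x\wedge y=\top$ forces $x=y=\top$, then simplify the right-hand side by substituting $\top$ for the appropriate conjunct.

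For items (2) and (3), however, your route is genuinely different and considerably shorter. The paper argues by contraposition and then performs an exhaustive case analysis, drawing Hasse-diagram fragments to check each configuration of $a$, $b$, $c$ relative to $\top$; for item (2) this runs to three main cases, one of which splits into four sub-cases with multiple diagrams. You instead invoke the two universal lattice inequalities $(a\wedge b)\vee(a\wedge c)\leq a\wedge(b\vee c)$ and $a\vee(b\wedge c)\leq(a\vee b)\wedge(a\vee c)$, which immediately give $v(\text{RHS})\geq v(\text{LHS})=\top$. Your argument is more conceptual and avoids the diagram-chasing entirely; the paper's version, while correct, does not exploit the fact that these two directions of distributivity are lattice identities and so ends up reproving a special case of them by hand.
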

We will work here in the lattice upon which the matrix is built. It suffices to show that for all $a,b,c\in\mathfrak{L}$ and for any valuation $v$ if the left-hand side is equal to $\top$ under $v$, then so is the right-hand side. 
\begin{sublemma}
\end{sublemma}
\begin{proof}
Indeed, let
\[\begin{array}{rcl}
a\wedge(b\vee c)=\top&\Rightarrow&a=\top\&b\vee c=\top\\
&\Rightarrow&a\wedge b=b\&a\wedge c=c\\
&\Rightarrow&(a\wedge b)\vee(a\wedge c)=\top
\end{array}\]
\begin{sublemma}
\end{sublemma}
Now, let $a\wedge(b\vee c)\neq\top$. We have three cases.
\begin{sublemmacase}
$a\neq\top$ and $b\vee c\neq\top$
\end{sublemmacase}
In this case $b,c\neq\top$. It is straightforward to obtain that $(a\wedge b)\vee(a\wedge c)\neq\top$.
\begin{sublemmacase}
$a\neq\top$ and $b\vee c=\top$
\end{sublemmacase}
We have four options here. Either $b=c=\top$, or $b=\top$ but $c\neq\top$, or $b\neq\top$ but $c=\top$, or $b\neq\top$ and $c\neq\top$.
		
In the first case $(a\wedge b)\vee(a\wedge c)=a\neq\top$. In the second case we have several possibilities.
\begin{longtable}{ccc}
$$\xymatrix{
&\top&\\
a\ar@{-}[ur]&&c\ar@{-}[ul]\\
&\dots&
}$$
&
$$\xymatrix{
\top\\
a\ar@{-}[u]\\
c\ar@{-}[u]\\
\dots
}$$
&
$$\xymatrix{
\top\\
c\ar@{-}[u]\\
a\ar@{-}[u]\\
\dots
}$$
\end{longtable}
	
It is easy to see, that $(a\wedge b)\vee(a\wedge c)\neq\top$ in each case.
	
The third case can be tackled in the same fashion as the second one.
		
Finally, in the fourth case we have the following options (dashed lines denote that it is possible that $a\vee b\neq\top$ and $a\vee c\neq\top$).
\begin{longtable}{ccc}
$$\xymatrix{
&\top&\\
a\ar@{--}[ur]&b\ar@{-}[u]&c\ar@{-}[ul]\\
&\dots&
}$$
&
$$\xymatrix{
&\top&\\
b\ar@{-}[ur]&&c\ar@{-}[ul]\\
a\ar@{-}[u]&&\\
&\dots&
}$$
&
$$\xymatrix{
&\top&\\
c\ar@{-}[ur]&&b\ar@{-}[ul]\\
a\ar@{-}[u]&&\\
&\dots&
}$$
\end{longtable}
\begin{longtable}{cc}
$$\xymatrix{
&\top&\\
a\ar@{-}[ur]&&b\ar@{-}[ul]\\
c\ar@{-}[u]&&\\
&\dots&&
}$$
&
$$\xymatrix{
&\top&\\
a\ar@{-}[ur]&&c\ar@{-}[ul]\\
b\ar@{-}[u]&&\\
&\dots&&
}$$
\end{longtable}
		
It is obvious, that $(a\wedge b)\vee(a\wedge c)\neq\top$ in each case.
\begin{sublemmacase}
$a=\top$ and $b\vee c\neq\top$
\end{sublemmacase}
In this case $(a\wedge b)\vee(a\wedge c)=b\vee c\neq\top$.
\end{proof}
\begin{sublemma}
\end{sublemma}
\begin{proof}
Let $(a\vee b)\wedge(a\vee c)\neq\top$.
\[\begin{array}{rcl}
(a\vee b)\wedge(a\vee c)\neq\top&\Rightarrow&a\vee b\neq\top\text{ or }a\vee c\neq\top\\
&\Rightarrow&a\neq\top\&b\neq\top\text{ or }a\neq\top\&c\neq\top
\end{array}\]
		
Consider the first case. We have the following options.
\begin{longtable}{ccc}
$$\xymatrix{
&\top&\\
&a\vee b\ar@{-}[u]&\\
a\ar@{-}[ur]&&b\ar@{-}[ul]\\
&\ldots&
}$$
&
$$\xymatrix{
&\top&\\
&a\ar@{-}[u]&\\
&b\ar@{-}[u]&\\
&\ldots&
}$$
&
$$\xymatrix{
&\top&\\
&b\ar@{-}[u]&\\
&a\ar@{-}[u]&\\
&\ldots&
}$$
\end{longtable}
It is evident that $a\vee(b\wedge c)\neq\top$.
		
The second case can be handled in the same fashion.
\end{proof}
\begin{sublemma}
\end{sublemma}
\begin{proof}
\[\begin{array}{rcl}
a\wedge(b\vee c)=\top&\Rightarrow&a=\top\&b\vee c=\top\\
&\Rightarrow&a\wedge b=b\\
&\Rightarrow&(a\wedge b)\vee c=b\vee c=\top
\end{array}\]
\end{proof}
\begin{sublemma}
\end{sublemma}
\begin{proof}
\[\begin{array}{rcl}
(a\vee b)\wedge c=\top&\Rightarrow&a\vee b=\top\&c=\top\\
&\Rightarrow&b\wedge c=b\\
&\Rightarrow&a\vee(b\wedge c)=a\vee b=\top
\end{array}\]
\end{proof}
\begin{lemma}\label{NFLdistributivity}
For any matrix $\mathrm{NFL}_{\mathfrak{L}}$ over a bounded lattice $\mathfrak{L}$ the following holds.
\begin{enumerate}
\item $\phi\vee(\chi\wedge\psi)\vDash_{\mathrm{NFL}_{\mathfrak{L}}}(\phi\vee\chi)\wedge(\phi\vee\psi)$.
\item $(\phi\vee\chi)\wedge(\phi\vee\psi)\vDash_{\mathrm{NFL}_{\mathfrak{L}}}\phi\vee(\chi\wedge\psi)$.
\item $(\phi\wedge\chi)\vee(\phi\wedge\psi)\vDash_{\mathrm{NFL}_{\mathfrak{L}}}\phi\wedge(\chi\vee\psi)$.
\item $\phi\wedge(\chi\vee\psi)\vDash_{\mathrm{NFL}_{\mathfrak{L}}}(\phi\wedge\chi)\vee\psi$.
\item $(\phi\vee\chi)\wedge\psi\vDash_{\mathrm{NFL}_{\mathfrak{L}}}\phi\vee(\chi\wedge\psi)$.
\end{enumerate}
\end{lemma}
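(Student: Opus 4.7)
The plan is to mirror the structure of the proof of Lemma~\ref{ETLdistributivity} but argue dually: for each NFL claim I would take the contrapositive and assume that the right-hand side evaluates to $\bot$ under an arbitrary valuation $v$, then derive that the left-hand side also evaluates to $\bot$. The key general fact I would exploit is that $x\vee y=\bot$ forces $x=y=\bot$ in any bounded lattice, since $x,y\le x\vee y=\bot$. This is the join-side counterpart of the $\top$-decomposition of meets used for ETL, and it turns out to be strong enough to eliminate the need for any case analysis over the lattice shape.

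For parts~1 and~3 I would simply invoke the distributive inequalities $\phi\vee(\chi\wedge\psi)\le(\phi\vee\chi)\wedge(\phi\vee\psi)$ and $(\phi\wedge\chi)\vee(\phi\wedge\psi)\le\phi\wedge(\chi\vee\psi)$, which hold in every lattice. Since the set of non-$\bot$ elements is upward-closed, a non-$\bot$ value on the left-hand side propagates immediately to the right-hand side, and the claims follow. For parts~2,~4, and~5 I would use the contrapositive together with the $\bot$-decomposition of joins: in part~2, $a\vee(b\wedge c)=\bot$ gives $a=\bot$ and $b\wedge c=\bot$, so $(a\vee b)\wedge(a\vee c)=b\wedge c=\bot$; in part~4, $(a\wedge b)\vee c=\bot$ gives $a\wedge b=\bot$ and $c=\bot$, so $a\wedge(b\vee c)=a\wedge b=\bot$; and part~5 is handled analogously to~2, yielding $(a\vee b)\wedge c=b\wedge c=\bot$.

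I do not expect a genuine obstacle. The contrast with the ETL proof is illuminating: there the hard case was showing that $(a\vee b)\wedge(a\vee c)\ne\top$ entails $a\vee(b\wedge c)\ne\top$, because the conclusion ``$a\ne\top$ and $b\wedge c\ne\top$'' is \emph{not} strong enough to prevent the join $a\vee(b\wedge c)$ from reaching $\top$ in a non-distributive lattice, which is precisely what forced the enumeration of lattice-shapes in Lemma~\ref{ETLdistributivity}. In the NFL setting the analogous pathology cannot occur, because $\bot$-decomposition of a join fully determines both summands, and the entire lemma reduces to the two structural inequalities above together with three one-line contrapositive computations.
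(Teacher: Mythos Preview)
Your proof is correct. The paper's own proof is a single line, ``By dualisation of Lemma~\ref{ETLdistributivity},'' and what you have written is precisely that dualisation carried out explicitly: the contrapositive together with the fact that $x\vee y=\bot$ forces $x=y=\bot$ is exactly the $\bot$/$\vee$ mirror of the $\top$/$\wedge$ decomposition used in the ETL lemma.

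One small remark: your treatment of parts~1 and~3 via the universal lattice inequalities $a\vee(b\wedge c)\le(a\vee b)\wedge(a\vee c)$ and $(a\wedge b)\vee(a\wedge c)\le a\wedge(b\vee c)$, combined with upward closure of the non-$\bot$ set, is in fact cleaner than a literal dualisation of the paper's arguments for the corresponding ETL items (Sublemmas~2 and~3), which proceed by an unnecessary case enumeration over lattice shapes. So your route is the same in spirit but slightly more economical. Your closing commentary about ``the hard case'' in the ETL proof is a little off --- the contrapositive you cite is itself an instance of the easy lattice-inequality direction, and the genuinely non-trivial direction is the one \emph{absent} from Lemma~\ref{ETLdistributivity} --- but this does not affect the validity of your argument.
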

\begin{proof}
By dualisation of Lemma~\ref{ETLdistributivity}.
\end{proof}
\section{Semantics for non-distributive relatives of ETL and NFL}\label{truthtables}
Now we need to find a bounded (preferably, finite) non-distributive lattice that possesses only needed distributive properties so that we can use it as a semantic framework (a reader interested in a thorough overview of results related to algebraic semantics of finite-valued logics may find them, e.g., in~\cite{FontJansana2009}). The most obvious candidates here are \textbf{M3} and \textbf{N5} lattices.
\begin{longtable}{cc}
$\xymatrix{
&\top&\\
x\ar@{-}[ur]&y\ar@{-}[u]&z\ar@{-}[ul]\\
&\bot\ar@{-}[ur]\ar@{-}[ul]\ar@{-}[u]&\\
}$
&
$\xymatrix{
&\top&\\
x\ar@{-}[ur]&&y\ar@{-}[ul]\\
&&z\ar@{-}[u]\\
&\bot\ar@{-}[ur]\ar@{-}[uul]&\\
}$
\end{longtable}
	
Observe also that the following version of the modular law holds for any $\mathrm{ETL}_{\mathfrak{L}}$ and $\mathrm{NFL}_{\mathfrak{L}}$ matrices on a bounded lattice $\mathfrak{L}$.
\[\phi\vDash_{\mathrm{ETL}_{\mathfrak{L}}}\chi\Rightarrow\phi\vee(\psi\wedge\chi)\vDash_{\mathrm{ETL}_{\mathfrak{L}}}(\phi\vee\psi)\wedge\chi\text{ and }(\phi\vee\psi)\wedge\chi\vDash_{\mathrm{ETL}_{\mathfrak{L}}}\phi\vee(\psi\wedge\chi)\]
\[\phi\vDash_{\mathrm{NFL}_{\mathfrak{L}}}\chi\Rightarrow\phi\vee(\psi\wedge\chi)\vDash_{\mathrm{NFL}_{\mathfrak{L}}}(\phi\vee\psi)\wedge\chi\text{ and }(\phi\vee\psi)\wedge\chi\vDash_{\mathrm{NFL}_{\mathfrak{L}}}\phi\vee(\psi\wedge\chi)\]
Note, however that $(\phi\vee\chi)\wedge(\phi\vee\psi)\vDash_{\mathrm{ETL}_{\mathbf{N5}}}\phi\vee(\chi\wedge\psi)$ and $\phi\wedge(\chi\vee\psi)\vDash_{\mathrm{NFL}_{\mathbf{N5}}}(\phi\wedge\chi)\vee(\phi\wedge\psi)$ hold. So, $\vDash_{\mathrm{ETL}_{\mathbf{N5}}}$ and $\vDash_{\mathrm{NFL}_{\mathbf{N5}}}$ \textit{are distributive} although, clearly, \textbf{N5} is \textit{non-distributive} w.r.t.\ usual ‘upward’ order. On the other hand, both $\vDash_{\mathrm{ETL}_{\mathbf{M3}}}$, $\vDash_{\mathrm{NFL}_{\mathbf{M3}}}$, and \textbf{M3} itself are not distributive and also modular.
	
Due to lemmas~\ref{ETLdistributivity} and~\ref{NFLdistributivity} we don't need to check matrices with more elements since any bounded lattice will lack at most one distributive property provable in ETL or NFL. Observe, however, that ETL (and NFL) logics built upon \textbf{M3} do not coincide with ETL and NFL over all bounded lattices. Indeed,
\begin{eqnarray}\label{ETLM3neqETLM4}
p\!\vee\!(((p\!\vee\!q)\!\vee\!(r\!\wedge\!s))\!\wedge\!(r\!\wedge(q\!\vee\!s)))\!\vDash_{\mathrm{ETL}_{\mathbf{M3}}}\!p\!\vee\!((q\!\vee\!(r\!\wedge\!s))\!\wedge\!(r\!\vee\!(p\!\wedge\!s)))
\end{eqnarray}
although this will not hold in the following lattice which we designate \textbf{M4}.
\[\xymatrix{
&&\top&&\\
{a}\ar@{-}[urr]&{b}\ar@{-}[ur]&&{c}\ar@{-}[ul]&{d}\ar@{-}[ull]\\
&&\bot\ar@{-}[ur]\ar@{-}[ul]\ar@{-}[urr]\ar@{-}[ull]&&\\
}\]
	
Lemmas~\ref{ETLdistributivity} and~\ref{NFLdistributivity} also imply that if we want to have a~non-distributive relatives of ETL and NFL based upon bounded lattices of truth values, we can drop only axioms $(\phi\vee\chi)\wedge(\phi\vee\psi)\vdash_{\mathrm{ETL}}\phi\vee(\chi\wedge\psi)$ and $\phi\wedge(\chi\vee\psi)\vdash_{\mathrm{NFL}}(\phi\vee\chi)\wedge(\phi\vee\psi)$.
	
In this section we will construct semantics for relatives of ETL and NFL lacking $$(\phi\vee\chi)\wedge(\phi\vee\psi)\vDash_{\mathrm{ETL}}\phi\vee(\chi\wedge\psi)$$ and $$\phi\wedge(\chi\vee\psi)\vDash_{\mathrm{NFL}}(\phi\vee\chi)\wedge(\phi\vee\psi)$$ which are based upon \textbf{M3}. We will then in \S\ref{generalisation} provide ETL- and NFL-like logics for a specific family of modular and non-distributive lattices w.r.t.\ $\preccurlyeq_{\mathrm{ETL}}$ and $\preccurlyeq_{\mathrm{NFL}}$.
	
We will further call these relatives ‘non-distributive ETL’ and ‘non-distributive NFL’ if we mean an arbitrary logic or the whole family. If we mean a logic for a~particular lattice, we will denote it $\mathrm{ETL}_{\mathfrak{L}}$ or $\mathrm{NFL}_{\mathfrak{L}}$ with $\mathfrak{L}$ being that lattice.
	
Again, just as in case of original Exactly True and Non-Falsity Logics, $\mathrm{ETL}_{\mathbf{M3}}$ or $\mathrm{NFL}_{\mathbf{M3}}$ use the same set of truth values and the same definitions of connectives. Their only difference lies in their entailment relations.
	
A valuation $v$ is a function mapping propositional variables to $\{\mathbf{T},\mathbf{B},\mathbf{0},\mathbf{N},\mathbf{F}\}$.
\[\xymatrix{
&\mathbf{T}&\\
\mathbf{B}\ar@{-}[ur]&\mathbf{0}\ar@{-}[u]&\mathbf{N}\ar@{-}[ul]\\
&\mathbf{F}\ar@{-}[ur]\ar@{-}[ul]\ar@{-}[u]&\\
}\]
	
The truth values of compound formulas are determined via the following truth tables.
\begin{center}
\begin{tabular}{ccc}
\begin{tabular}{c|c}
$\neg$&\\\hline
$\mathbf{T}$&$\mathbf{F}$\\
$\mathbf{B}$&$\mathbf{B}$\\
$\mathbf{0}$&$\mathbf{0}$\\
$\mathbf{N}$&$\mathbf{N}$\\
$\mathbf{F}$&$\mathbf{T}$\\
\end{tabular}
&
\begin{tabular}{c|ccccc}
$\wedge$&$\mathbf{T}$&$\mathbf{B}$&$\mathbf{0}$&$\mathbf{N}$&$\mathbf{F}$\\\hline
$\mathbf{T}$&$\mathbf{T}$&$\mathbf{B}$&$\mathbf{0}$&$\mathbf{N}$&$\mathbf{F}$\\
$\mathbf{B}$&$\mathbf{B}$&$\mathbf{B}$&$\mathbf{F}$&$\mathbf{F}$&$\mathbf{F}$\\
$\mathbf{0}$&$\mathbf{0}$&$\mathbf{F}$&$\mathbf{0}$&$\mathbf{F}$&$\mathbf{F}$\\
$\mathbf{N}$&$\mathbf{N}$&$\mathbf{F}$&$\mathbf{F}$&$\mathbf{N}$&$\mathbf{F}$\\
$\mathbf{F}$&$\mathbf{F}$&$\mathbf{F}$&$\mathbf{F}$&$\mathbf{F}$&$\mathbf{F}$\\
\end{tabular}
&
\begin{tabular}{c|ccccc}
$\vee$&$\mathbf{T}$&$\mathbf{B}$&$\mathbf{0}$&$\mathbf{N}$&$\mathbf{F}$\\\hline
$\mathbf{T}$&$\mathbf{T}$&$\mathbf{T}$&$\mathbf{T}$&$\mathbf{T}$&$\mathbf{T}$\\
$\mathbf{B}$&$\mathbf{T}$&$\mathbf{B}$&$\mathbf{T}$&$\mathbf{T}$&$\mathbf{B}$\\
$\mathbf{0}$&$\mathbf{T}$&$\mathbf{T}$&$\mathbf{0}$&$\mathbf{T}$&$\mathbf{0}$\\
$\mathbf{N}$&$\mathbf{T}$&$\mathbf{T}$&$\mathbf{T}$&$\mathbf{N}$&$\mathbf{N}$\\
$\mathbf{F}$&$\mathbf{T}$&$\mathbf{B}$&$\mathbf{0}$&$\mathbf{N}$&$\mathbf{F}$\\
\end{tabular}
\end{tabular}
\end{center}
	
Truth tables are quite expectable since $\wedge$ and $\vee$ coincide with meet and join of \textbf{M3} while negation allows for the following version of De Morgan laws.
\[\forall v:v(\neg(\phi\wedge\chi))=v(\neg\phi\vee\neg\chi)\quad\forall v:v(\neg(\phi\vee\chi))=v(\neg\phi\wedge\neg\chi)\]
	
Entailment relations are as follows.
\begin{definition}\label{entailments}
\[\phi\vDash_{\mathrm{ETL}_{\mathbf{M3}}}\chi\Leftrightarrow\forall v:v(\phi)=\mathbf{T}\Rightarrow v(\chi)=\mathbf{T}\]
\[\phi\vDash_{\mathrm{NFL}_{\mathbf{M3}}}\chi\Leftrightarrow\forall v:v(\phi)\neq\mathbf{F}\Rightarrow v(\chi)\neq\mathbf{F}\]
\end{definition}
	
As one can see, these entailment relations conform to the requirements from Definition~\ref{ETLNFLlike}. Moreover, it is tedious but straightforward to check that all axioms and rules of ETL and NFL except for $$(\phi\vee\chi)\wedge(\phi\vee\psi)\vdash_{\mathrm{ETL}}\phi\vee(\chi\wedge\psi)$$ and $$\phi\wedge(\chi\vee\psi)\vdash_{\mathrm{NFL}}(\phi\vee\chi)\wedge(\phi\vee\psi)$$ do hold.

In this paper we will be concerned with ETL- and NFL-like logics built upon a certain family of lattices which we call $\mathbf{Mn}$ lattices (Fig.~\ref{Mnlatticesfigure}). One can easily see that ETL- and NFL-like logics can be constructed if conjunction and disjunction coincide with the meet and join of these lattices and negation flips $\top$ and $\bot$ leaving elements designated with numbers intact.

The logics built upon the first two lattices are, obviously, \textbf{K3} and Priest's logic of paradox, and ETL and NFL respectively.
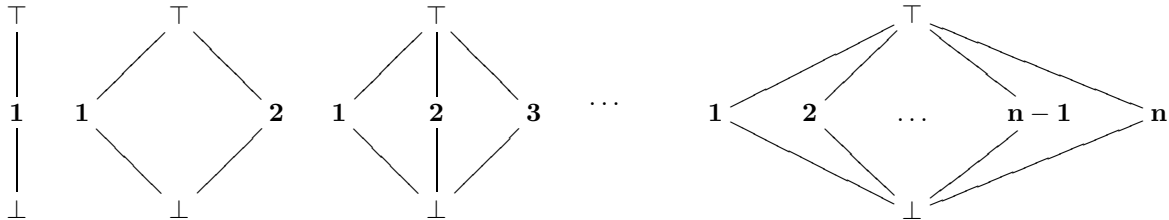
\begin{figure}[h!]
\begin{center}
\begin{tabular}{ccccc}
$\xymatrix{\top\\\ar@{-}[d]\ar@{-}[u]\mathbf{1}\\\bot}$
&
$\xymatrix{
&\top&\\
\mathbf{1}\ar@{-}[ur]&&\mathbf{2}\ar@{-}[ul]\\
&\bot\ar@{-}[ur]\ar@{-}[ul]&}$
&
$\xymatrix{
&\top&\\
\mathbf{1}\ar@{-}[ur]&\mathbf{2}\ar@{-}[u]&\mathbf{3}\ar@{-}[ul]\\
&\bot\ar@{-}[ur]\ar@{-}[ul]\ar@{-}[u]&\\}$
&
$\xymatrix{\\\ldots\\}$
&
\end{tabular}
\begin{tabular}{c}
$\xymatrix{
&&\top&&\\
\mathbf{1}\ar@{-}[urr]&\mathbf{2}\ar@{-}[ur]&\ldots&\mathbf{n-1}\ar@{-}[ul]&\mathbf{n}\ar@{-}[ull]\\
&&\bot\ar@{-}[ur]\ar@{-}[ul]\ar@{-}[urr]\ar@{-}[ull]&&\\}$
\end{tabular}
\end{center}
\caption{$\mathbf{Mn}$ lattices. From left to right: the three-element lattice, \textbf{4}, \textbf{M3}, etc. In general, $\mathbf{Mn}$ lattice contains $n$ elements on its middle level.}\label{Mnlatticesfigure}
\end{figure}

Our goal now is to provide proof systems formalising these semantics.
\section{Analytic tableaux}\label{nDETLproofsystem}
\subsection{Motivation}\label{motivation}
As we have seen, both ETL- and NFL-like logics for \textbf{M3} as well as any ETL- and NFL-like logics whose truth values comprise a finite bounded lattice are actually logics of finite matrices. As such, there is an algorithmic way to construct analytic calculi for them using ‘$n$-sided’ sequents (cf.~\cite{BaazFermullerSalzerZach1996} for the system description and~\cite{BaazFermullerSalzer2000} for theoretical work\footnote{A reader may also want to use the programme itself which can be downloaded at \url{https://www.logic.at/multlog/}.}). Another approach which is closer to tableaux would be to use ‘multiple-conclusion’ calculi (cf., e.g.~\cite{MarcelinoCaleiro2019} for recent results which show how to acquire multiple conclusion calculi from finite non-deterministic matrices).
	
Hence, there is a need to motivate a new proof system. Indeed, the above mentioned approaches easily produce analytic calculi whose completeness is also easy to prove. On the other hand, the number of ‘sides’ of sequents and rules for these calculi grows proportionally to the number of truth values of the logic\footnote{In particular, there would be five rules for each of $\{\neg,\wedge,\vee\}$ if we constructed such a calculus for $\mathrm{ETL}_{\mathbf{M3}}$ or $\mathrm{NFL}_{\mathbf{M3}}$.}. Moreover, proof systems for related logics --- like FDE, Kleene's strong three-valued logic and LP which usually differ only by one axiom (cf.~\cite{ShramkoZaitsevBelikov2017,ShramkoZaitsevBelikov2018} for details) --- will not be related in such a clear way if we use $n$-sided sequents since FDE is four-valued while LP and Kleene's logic are three-valued. Further, the multiple conclusion calculi would require the use of different separating formulas for different matrices, so we would lose the desired generalisability of our tableaux.
	
This is why, in the following section we will provide an analytic tableaux system for sequents of the form $\phi\vdash\chi$ which has several properties which we deem more desirable and more convenient than those of analytic calculi with $n$-sided sequents.
\begin{enumerate}
\item We use the same labels for formulas for any ETL- or NFL-like logic as long as its truth values comprise an \textbf{Mn} lattice (cf.~\S\ref{generalisation}).
\item The rules and definition of a closed tableau for any two ETL- and NFL-like logics built upon the same lattice will be the same. The only difference would be which tableaux we will build in order to prove or disprove a~sequent.
\item The tableaux can be simply adjusted without adding new rules or labels for formulas for any ETL- or NFL-like logic as long as its truth values comprise a lattice of a certain kind.
\end{enumerate}
\subsection[Unified tableaux]{A unified tableau calculus for $\mathrm{ETL}_{\mathbf{M3}}$ and $\mathrm{NFL}_{\mathbf{M3}}$}\label{tableaux}
We are now ready to provide our tableaux. They are going to be presented in a fashion similar to those of Smullyan's~\cite{Smullyan1995} in the sense that we will have two kinds of rules --- $\alpha$ (which do not require splitting the branch) and $\beta$ (which split the branch) ones --- which will extend the branch with new labelled formulas. Moreover, our tableaux are \textit{refutation} tableaux in the sense that in order to prove $\phi\vdash\chi$ we try to establish that there is no refuting valuation $v$ such that $v(\phi)=\mathbf{T}$ but $v(\chi)\neq\mathbf{T}$ if we are considering $\mathrm{ETL}_{\mathbf{M3}}$\footnote{For $\mathrm{NFL}_{\mathbf{M3}}$ we would try to show that there is no valuation $v$ such that $v(\phi)\neq\mathbf{F}$ but $v(\chi)=\mathbf{F}$.}.
	
As we have already mentioned, $\mathrm{ETL}_{\mathbf{M3}}$ and $\mathrm{NFL}_{\mathbf{M3}}$ use the same rules and the same criteria for the branch closure.
\begin{definition}[Tableaux for $\mathrm{ETL}_{\mathbf{M3}}$ and $\mathrm{NFL}_{\mathbf{M3}}$]\label{M3tableaux}
We define a tableau as a downward branching tree each node of which contains a set of labelled formulas and labelled pairs of formulas. Any formula $\phi$ can be labelled $\mathfrak{t}[\phi]$, $\mathfrak{m}[\phi]$ or $\mathfrak{f}[\phi]$, moreover, a pair of formulas can be labelled $\phi\sim\chi$ or $\phi\nsim\chi$.

Let us clarify the meanings of our labels. As expected, $\mathfrak{t}[\phi]$ and $\mathfrak{f}[\phi]$ denote that for the valuation $v$ $v(\phi)=\mathbf{T}$ and $v(\phi)=\mathbf{F}$ respectively while $\mathfrak{m}[\phi]$ tells us that $v(\phi)\in\{\mathbf{B},\mathbf{0},\mathbf{N}\}$ but does not provide an explicit valuation for $\phi$. In order to overcome this obstacle, we use $\sim$ and $\nsim$ which tell us that two formulas have the same or different values from $\{\mathbf{B},\mathbf{0},\mathbf{N}\}$.

The branch can be extended by an application of one of the following rules if it contains the appropriate premise(s) (below $i=1,2$, $p$, $q$ and $r$ are propositional variables and $\circ\in\{\wedge,\vee\}$). If the conclusion of a rule contains several sets, vertical bars designate splitting of the branch.

We will have two kinds of rules: the one decomposing labelled formulas and the one decomposing labelled pairs of formulas into labelled pairs of formulas with fewer connectives.
\[(\mathfrak{t}\wedge)\dfrac{\mathfrak{t}[\phi\wedge\chi]}{\mathfrak{t}[\phi],\mathfrak{t}[\chi]}\quad(\mathfrak{t}\vee)\dfrac{\mathfrak{t}[\phi\vee\chi]}{\mathfrak{t}[\phi]\mid\mathfrak{t}[\chi]\mid\mathfrak{m}[\phi],\mathfrak{m}[\chi],\phi\nsim\chi}\quad(\mathfrak{t}\neg)\dfrac{\mathfrak{t}[\neg\phi]}{\mathfrak{f}[\phi]}\]\vspace{.5em}
\[(\mathfrak{m}\wedge)\dfrac{\mathfrak{m}[\phi\wedge\chi]}{\mathfrak{t}[\phi],\mathfrak{m}[\chi]\mid\mathfrak{m}[\phi],\mathfrak{t}[\chi]\mid\mathfrak{m}[\phi],\mathfrak{m}[\chi],\phi\sim\chi}\]
\[(\mathfrak{m}\vee)\dfrac{\mathfrak{m}[\phi\vee\chi]}{\mathfrak{f}[\phi],\mathfrak{m}[\chi]\mid\mathfrak{m}[\phi],\mathfrak{f}[\chi]\mid\mathfrak{m}[\phi],\mathfrak{m}[\chi],\phi\sim\chi}\]
\[(\mathfrak{m}\neg)\dfrac{\mathfrak{m}[\neg\phi]}{\mathfrak{m}[\phi],\phi\sim\neg\phi}\]\vspace{.5em}
\[(\mathfrak{f}\wedge)\dfrac{\mathfrak{f}[\phi\wedge\chi]}{\mathfrak{f}[\phi]\mid\mathfrak{f}[\chi]\mid\mathfrak{m}[\phi],\mathfrak{m}[\chi],\phi\nsim\chi}\quad(\mathfrak{f}\vee)\dfrac{\mathfrak{f}[\phi\vee\chi]}{\mathfrak{f}[\phi],\mathfrak{f}[\chi]}\quad(\mathfrak{f}\neg)\dfrac{\mathfrak{f}[\neg\phi]}{\mathfrak{t}[\phi]}\]\vspace{.5em}
\[(\sim\mathsf{sym})\dfrac{\phi\sim\chi}{\chi\sim\phi}\quad(\nsim\mathsf{sym})\dfrac{\phi\nsim\chi}{\chi\nsim\phi}\quad(\sim\mathsf{trans})\dfrac{p\sim q,q\sim r}{p\sim r}\quad(\nsim\mathsf{trans})\dfrac{p\nsim q,q\sim r}{p\nsim r}\]\vspace{.5em}
\[(\neg\!\sim\!)\dfrac{\neg\phi\!\sim\!\chi}{\phi\!\sim\!\chi}\quad(\neg\!\nsim\!)\dfrac{\neg\phi\!\nsim\!\chi}{\phi\!\nsim\!\chi}\quad(\circ\!\sim\!)\dfrac{\phi\!\sim\!\chi_1\!\circ\!\chi_2,\mathfrak{m}[\chi_i]}{\phi\!\sim\!\chi_i}\quad(\circ\!\nsim\!)\dfrac{\phi\!\nsim\!\chi_1\!\circ\!\chi_2,\mathfrak{m}[\chi_i]}{\phi\nsim\chi_i}\]
We will further present our tableaux proofs in a tree-like form with edges designating the splitting of the branch. When talking about branches we will also represent them with sets and therefore will not repeat labelled formulas or pairs thereof if they have already occurred up the branch.  Moreover, we will omit the use of rules for symmetricity of $\sim$ and $\nsim$ for the sake of brevity.
		
We say that a branch of a tableau is \textit{closed} iff it contains one of the following.
\begin{enumerate}
\item $\mathcal{M}[\phi]$ and $\mathcal{M}'[\phi]$ with $\mathcal{M}\neq\mathcal{M}'$.
\item $\phi\sim\chi$ and $\phi\nsim\chi$.
\item $\phi\nsim\phi$.
\item\label{condition4} $\phi_1\nsim\phi_2$, $\phi_1\nsim\phi_3$, $\phi_1\nsim\phi_4$, $\phi_2\nsim\phi_3$, $\phi_2\nsim\phi_4$, and $\phi_3\nsim\phi_4$.
\end{enumerate}
		
A branch is called \textit{open}, iff it is not closed. Finally, we call a branch $\mathcal{B}$ \textit{complete}, iff whenever $\mathcal{B}$ contains the premise(s) of an instance of a rule, $\mathcal{B}$ also contains one set of conclusions\footnote{As an example, consider the branch $\{\mathfrak{m}[p\wedge q],\mathfrak{m}[p],\mathfrak{t}[q]\}$. It is complete since the rule $\mathfrak{m}\wedge$ requires splitting and one of its resulting sub-branches is exactly $\{\mathfrak{m}[p],\mathfrak{t}[q]\}$. On the other hand, $\{\mathfrak{t}[p\vee q],\mathfrak{m}[p],\mathfrak{m}[q]\}$ is not complete since the application of $\mathfrak{t}\vee$ forms a sub-branch $\{\mathfrak{m}[p],\mathfrak{m}[q],p\nsim q\}$.}.

We say that a tableau is closed iff all its branches are closed.
		
We say that there is a tableaux proof for $\phi\vdash_{\mathrm{ETL}_{\mathbf{M3}}}\chi$ iff tableaux beginning with $\{\mathfrak{t}[\phi],\mathfrak{m}[\chi]\}$ and $\{\mathfrak{t}[\phi],\mathfrak{f}[\chi]\}$ are \textbf{both} closed.
		
Dually, there is a tableaux proof for $\phi\vdash_{\mathrm{NFL}_{\mathbf{M3}}}\chi$ iff tableaux beginning with $\{\mathfrak{m}[\phi],\mathfrak{f}[\chi]\}$ and $\{\mathfrak{t}[\phi],\mathfrak{f}[\chi]\}$ are \textbf{both} closed.
\end{definition}

The conditions dictating closure of branches mean the following: (1) says that $\phi$ cannot have two different values under one valuation; (2) says that is impossible for two formulas to have equal and distinct valuations simultaneously; (3) says that it is impossible for a value of a formula to be distinct from itself; finally, (4) says that there cannot be a valuation $v$ such that $v(\phi_1),\ldots,v(\phi_4)\in\{\mathbf{B},\mathbf{0},\mathbf{N}\}$ are pairwise distinct.
	
It is also easy to see that for any sequent $\phi\vdash\chi$ its tableau will terminate since our rules allow to decompose each labelled formula into variables and reduce all labelled pairs formulas to labelled pairs of variables.
	
Consider two examples below that show how to construct a tableau and obtain a refuting valuation from its complete open branch. First, we show that there is no valuation $v$ such that $v((p\vee q)\wedge r)=\mathbf{T}$ but $v(p\vee(q\wedge r))\in\{\mathbf{B},\mathbf{0},\mathbf{N}\}$ (Fig.~\ref{goodtableau}). Then we show that $(p\wedge\neg p)\vee(q\wedge\neg q)\nvdash r$ and then extract a refuting valuation (Fig.~\ref{badtableau}).
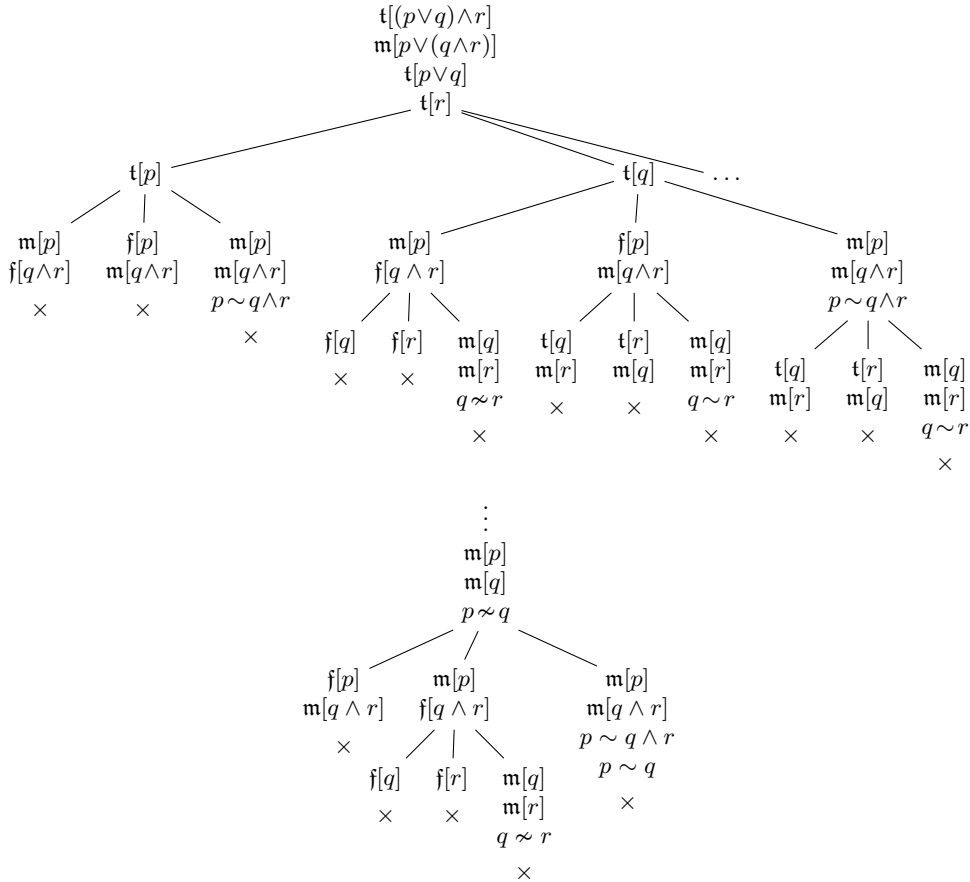
\begin{figure}[h!]
\begin{center}
{\small
\begin{forest}
smullyan tableaux
[{\mathfrak{t}[(p\!\vee\!q)\!\wedge\!r]}
[{\mathfrak{m}[p\!\vee\!(q\!\wedge\!r)]}
[{\mathfrak{t}[p\!\vee\!q]}
[{\mathfrak{t}[r]}
[{\mathfrak{t}[p]}[{\mathfrak{m}[p]}[{\mathfrak{f}[q\!\wedge\!r]}, closed]][{\mathfrak{f}[p]}[{\mathfrak{m}[q\!\wedge\!r]}, closed]][{\mathfrak{m}[p]}[{\mathfrak{m}[q\!\wedge\!r]}[p\!\sim\!q\!\wedge\!r, closed]]]]
[{\mathfrak{t}[q]}[{\mathfrak{m}[p]}[{\mathfrak{f}[q\wedge r]}[{\mathfrak{f}[q]}, closed][{\mathfrak{f}[r]}, closed][{\mathfrak{m}[q]}[{\mathfrak{m}[r]}[{q\!\nsim\!r}, closed]]]]]
[{\mathfrak{f}[p]}[{\mathfrak{m}[q\!\wedge\!r]}[{\mathfrak{t}[q]}[{\mathfrak{m}[r]}, closed]][{\mathfrak{t}[r]}[{\mathfrak{m}[q]}, closed]][{\mathfrak{m}[q]}[{\mathfrak{m}[r]}[{q\!\sim\!r}, closed]]]]]
[{\mathfrak{m}[p]}[{\mathfrak{m}[q\!\wedge\!r]}[p\!\sim\!q\!\wedge\!r[{\mathfrak{t}[q]}[{\mathfrak{m}[r]}, closed]][{\mathfrak{t}[r]}[{\mathfrak{m}[q]}, closed]][{\mathfrak{m}[q]}[{\mathfrak{m}[r]}[{q\!\sim\!r}, closed]]]]]]]
[\ldots]]]]]
\end{forest}
\begin{forest}
smullyan tableaux
[\vdots[{\mathfrak{m}[p]}[{\mathfrak{m}[q]}[{p\!\nsim\!q}[{\mathfrak{f}[p]}[{\mathfrak{m}[q\wedge r]}, closed]][{\mathfrak{m}[p]}[{\mathfrak{f}[q\wedge r]}[{\mathfrak{f}[q]}, closed][{\mathfrak{f}[r]}, closed][{\mathfrak{m}[q]}[{\mathfrak{m}[r]}[q\nsim r, closed]]]]][{\mathfrak{m}[p]}[{\mathfrak{m}[q\wedge r]}[p\sim q\wedge r[p\sim q, closed]]]]]]]]
\end{forest}}
\end{center}
\caption{A tableaux showing that there is no valuation $v$ such that $v((p\vee q)\wedge r)=\mathbf{T}$ but $v(p\vee(q\wedge r))\in\{\mathbf{B},\mathbf{0},\mathbf{N}\}$. All branches are closed.}\label{goodtableau}
\end{figure}
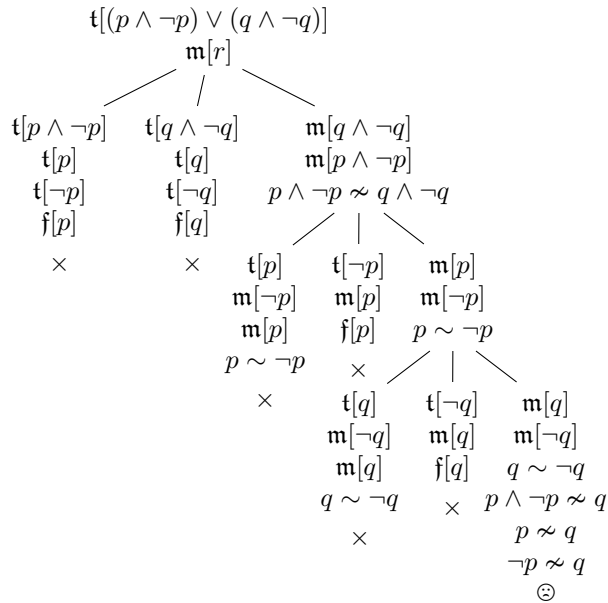
\begin{figure}[h!]
\begin{center}
\begin{forest}
smullyan tableaux
[{\mathfrak{t}[(p\wedge\neg p)\vee(q\wedge\neg q)]}[{\mathfrak{m}[r]}
[{\mathfrak{t}[p\wedge\neg p]}[{\mathfrak{t}[p]}[{\mathfrak{t}[\neg p]}[{\mathfrak{f}[p]}, closed]]]]
[{\mathfrak{t}[q\wedge\neg q]}[{\mathfrak{t}[q]}[{\mathfrak{t}[\neg q]}[{\mathfrak{f}[q]}, closed]]]]
[{\mathfrak{m}[q\wedge\neg q]}[{\mathfrak{m}[p\wedge\neg p]}[p\wedge\neg p\nsim q\wedge\neg q[{\mathfrak{t}[p]}[{\mathfrak{m}[\neg p]}[{\mathfrak{m}[p]}[p\sim\neg p, closed]]]][{\mathfrak{t}[\neg p]}[{\mathfrak{m}[p]}[{\mathfrak{f}[p]}, closed]]][{\mathfrak{m}[p]}[{\mathfrak{m}[\neg p]}[p\sim\neg p[{\mathfrak{t}[q]}[{\mathfrak{m}[\neg q]}[{\mathfrak{m}[q]}[q\sim\neg q, closed]]]][{\mathfrak{t}[\neg q]}[{\mathfrak{m}[q]}[{\mathfrak{f}[q]}, closed]]][{\mathfrak{m}[q]}[{\mathfrak{m}[\neg q]}[q\sim\neg q[p\wedge\neg p\nsim q[p\nsim q[\neg p\nsim q[\frownie]]]]]]]]]]]]]]]
\end{forest}
\end{center}
\caption{A tableau refuting $(p\wedge\neg p)\vee(q\wedge\neg q)\nvdash r$. The frownie shows a complete open branch.}\label{badtableau}
\end{figure}
	
A refuting valuation is obtained as follows. First, we note that all variables --- $p$, $q$ and $r$ --- are labelled $\mathfrak{m}$. So, we need to assign them values from $\{\mathbf{B},\mathbf{N},\mathbf{0}\}$. We also see that $p\nsim q$, hence $p$ and $q$ should have distinct values, but there is no such label for $r$. So, let us set $v(p)=\mathbf{B}$, $v(q)=\mathbf{0}$ and $v(r)=\mathbf{B}$. It is easy to see that $v((p\wedge\neg p)\vee(q\wedge\neg q))=\mathbf{T}$ but $v(r)=\mathbf{B}$.
	
We draw the reader's attention to the fact that \textbf{B}, \textbf{0} and \textbf{N} are indistinguishable w.r.t.\ $\vDash_{\mathrm{ETL}_{\mathbf{M3}}}$ and $\vDash_{\mathrm{NFL}_{\mathbf{M3}}}$ in the sense that either all of them belong to the set of designated values (as in the case of $\mathrm{NFL}_{\mathbf{M3}}$) or none of them do ($\mathrm{ETL}_{\mathbf{M3}}$). This allows us to use only three labels for formulas and two for pairs of formulas.

The last fact we wish to notice in this section is that two more general rules, namely, $(\sim\!\mathsf{Tr.}\!)\dfrac{\{\phi\sim\chi,\chi\sim\psi\}}{\{\phi\sim\psi\}}$ and $(\nsim\!\mathsf{Tr.}\!)\dfrac{\{\phi\nsim\chi,\chi\sim\psi\}}{\{\phi\sim\psi\}}$ are admissible in the following sense.
\begin{lemma}\label{cutadmissibility}
For any formulas $\phi$, $\chi$ and $\psi$ the following holds: \emph{any branch $\mathcal{B}$ containing (1) $\phi\sim\chi$, $\chi\sim\psi$ and $\phi\nsim\psi$ or (2) $\phi\nsim\chi$, $\chi\sim\psi$ and $\phi\sim\psi$ can be closed.}
\end{lemma}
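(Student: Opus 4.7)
The plan is to prove both parts by simultaneous induction on the total connective complexity $|\phi|+|\chi|+|\psi|$. By the symmetry rules $(\sim\mathsf{sym})$ and $(\nsim\mathsf{sym})$ we may freely swap the sides of any $\sim$- or $\nsim$-relation during the argument, so it suffices to analyse one representative subcase at each step.

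For the base case all three formulas are propositional variables. In case~(1) an application of $(\sim\mathsf{trans})$ to $\phi\sim\chi,\chi\sim\psi$ produces $\phi\sim\psi$, which together with the $\phi\nsim\psi$ already on $\mathcal{B}$ fires closure condition~(2) of Definition~\ref{M3tableaux}. Case~(2) is dual: $(\nsim\mathsf{trans})$ applied to $\phi\nsim\chi,\chi\sim\psi$ yields $\phi\nsim\psi$, again conflicting with the $\phi\sim\psi$ already present.

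For the induction step I would pick a non-atomic formula among $\phi,\chi,\psi$; by symmetry it suffices to treat the case that it is $\phi$. If $\phi=\neg\phi'$, then $(\neg\sim)$ reduces $\phi\sim\chi$ to $\phi'\sim\chi$ and $(\neg\nsim)$ reduces $\phi\nsim\psi$ to $\phi'\nsim\psi$ (analogously for case~(2)), producing a triple with strictly smaller complexity to which the induction hypothesis applies. If $\phi=\phi_1\circ\phi_2$ with $\circ\in\{\wedge,\vee\}$, we first use $(\sim\mathsf{sym})$ and $(\nsim\mathsf{sym})$ to move the compound to the right side of each relation, then apply $(\circ\sim)$ and $(\circ\nsim)$ to replace $\phi_1\circ\phi_2$ by a subformula $\phi_i$. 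These two rules carry the side condition $\mathfrak{m}[\phi_i]$; we obtain it by applying $(\mathfrak{m}\circ)$ to $\mathfrak{m}[\phi_1\circ\phi_2]$ (which the branch either already carries, or can be extended with, since $\phi\sim\chi$ and $\phi\nsim\psi$ force $\phi$ to take a middle value), yielding three subbranches in each of which at least one $\mathfrak{m}[\phi_i]$ appears. In every subbranch the pair-relations reduce to pair-relations on subformulas, and the induction hypothesis closes the resulting smaller triples.

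The main obstacle is the $\wedge/\vee$ case, because $(\circ\sim)$ and $(\circ\nsim)$ cannot fire unless the required $\mathfrak{m}$-labels on subformulas are available. The key bookkeeping is to show that every subbranch generated by the $(\mathfrak{m}\circ)$-split on $\phi_1\circ\phi_2$ supplies an $\mathfrak{m}[\phi_i]$ sufficient to reduce both the surviving $\sim$-relation and the surviving $\nsim$-relation to triples of strictly smaller total complexity, so that the induction hypothesis can be invoked uniformly. Once this reduction is carried through in each of the three $(\mathfrak{m}\wedge)$- or $(\mathfrak{m}\vee)$-subbranches, the two admissibility claims follow simultaneously.
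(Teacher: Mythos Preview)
Your proposal is correct and follows the same inductive strategy as the paper: reduce the compound $\sim$/$\nsim$ relations to simpler ones via $(\circ\sim)$, $(\circ\nsim)$, $(\neg\sim)$, $(\neg\nsim)$, using the $\mathfrak{m}$-labels on subformulas supplied by decomposing $\mathfrak{m}[\phi_1\circ\phi_2]$, and close by $(\sim\mathsf{trans})$/$(\nsim\mathsf{trans})$ at the level of variables. One point to sharpen: your reason for why $\mathfrak{m}[\phi_1\circ\phi_2]$ is available on the branch (``$\phi\sim\chi$ and $\phi\nsim\psi$ force $\phi$ to take a middle value'') is a semantic observation, whereas the lemma is a syntactic claim about derivability in the tableau calculus; the paper's justification is the right one here, namely that by inspection of the rules any $\sim$- or $\nsim$-label can only enter a branch together with $\mathfrak{m}$-labels on both of its arguments, so $\mathfrak{m}[\phi]$ is already present and no ``extension'' step is needed. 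Apart from this, the only cosmetic difference is that the paper decomposes all three formulas simultaneously rather than one at a time.
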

\begin{proof}
We prove this by induction on complexity of $\phi$, $\chi$ and $\psi$. The basis (when $\phi$, $\chi$ and $\psi$ are propositional variables) holds by virtue of the definition of tableaux. We will further prove only the case (1) since (2) can be proved in the same way.

Now, let $\phi$, $\chi$, and $\psi$ be arbitrary formulas. Say, the branch contains $\phi\sim\chi$, $\chi\sim \psi$ and $\phi\nsim \psi$. We have three cases for each $\tau\in\{\phi,\chi,\psi\}$. Either $\tau=\tau_1\wedge\tau_2$, $\tau=\tau_1\vee\tau_2$, or $\tau=\neg\tau'$. We will consider only the case when each $\tau$ is $\tau_1\wedge\tau_2$ (other cases are tackled similarly).

If $\mathcal{B}$ contains $\phi_1\wedge\phi_2\sim\chi_1\wedge\chi_2$ and $\chi_1\wedge\chi_2\sim\psi_1\wedge\psi_2$, then $\mathfrak{m}[\phi_1\wedge\phi_2]$, $\mathfrak{m}[\chi_1\wedge\chi_2]$, $\mathfrak{m}[\psi_1\wedge\psi_2]$ are also present (since a~pair of formulas labelled with $\sim$ can appear only if both of them are labelled~$\mathfrak{m}$). But then for each $\tau\in\{\phi,\chi,\psi\}$ either $\mathfrak{m}[\tau_1]$, or $\mathfrak{m}[\tau_2]$, or both are in $\mathcal{B}$. Hence, we can infer $\phi_i\sim\chi_j$, $\chi_j\sim\psi_k$ and $\phi_i\nsim\psi_k$ for some $i,j,k\in\{1,2\}$ using $(\circ\sim)$ rule. By induction hypothesis, this branch can be closed.
\end{proof}

Note also that with the rules $\sim\mathsf{Tr.}$ and $\nsim\mathsf{Tr.}$ from Lemma~\ref{cutadmissibility} we do not need rules $\neg\sim$ and $\neg\nsim$.
\begin{lemma}\label{cutisstrong}
Assume, we have rules $\sim\mathsf{Tr.}$ and $\nsim\mathsf{Tr.}$. Then if a branch contains $\neg\phi\sim\chi$, it can be extended with $\phi\sim\chi$ (likewise for $\phi\nsim\chi$ and $\phi\nsim\chi$).
\end{lemma}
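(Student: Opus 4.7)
The plan is to derive the desired pair by chaining the given pair through the auxiliary pair $\phi\sim\neg\phi$ produced by the rule $(\mathfrak{m}\neg)$, and then applying the admissible transitivity-style rules $\sim\mathsf{Tr.}$ and $\nsim\mathsf{Tr.}$ from Lemma~\ref{cutadmissibility}.

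First I would record a small auxiliary fact: whenever a branch contains a pair labelled $\sim$ or $\nsim$, both formulas of that pair are also labelled $\mathfrak{m}$ somewhere on the branch. Inspection of every rule in Definition~\ref{M3tableaux} that can introduce a $\sim$- or $\nsim$-pair (namely $\mathfrak{t}\vee$, $\mathfrak{m}\wedge$, $\mathfrak{m}\vee$, $\mathfrak{m}\neg$, $\mathfrak{f}\wedge$, $\circ\sim$, $\circ\nsim$, $\neg\sim$, $\neg\nsim$ and the symmetry rules) shows that this property is preserved by the calculus, so an easy induction on the length of the branch suffices. In particular, if $\neg\phi\sim\chi$ (or $\neg\phi\nsim\chi$) is on the branch, then $\mathfrak{m}[\neg\phi]$ is there too, and an application of $(\mathfrak{m}\neg)$ extends the branch with $\mathfrak{m}[\phi]$ together with the pair $\phi\sim\neg\phi$.

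From this point the two cases are essentially immediate. For $\neg\phi\sim\chi$, I would apply $\sim\mathsf{Tr.}$ to $\phi\sim\neg\phi$ and $\neg\phi\sim\chi$, yielding $\phi\sim\chi$. For $\neg\phi\nsim\chi$, I would first use $(\nsim\mathsf{sym})$ to rewrite the given pair as $\chi\nsim\neg\phi$ and $(\sim\mathsf{sym})$ to rewrite $\phi\sim\neg\phi$ as $\neg\phi\sim\phi$; an application of $\nsim\mathsf{Tr.}$ to $\chi\nsim\neg\phi$ together with $\neg\phi\sim\phi$ then delivers $\chi\nsim\phi$, and a final use of $(\nsim\mathsf{sym})$ yields the desired $\phi\nsim\chi$.

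The only point that requires any care is bookkeeping the symmetries so that the premises line up with the exact shape prescribed by $\sim\mathsf{Tr.}$ and $\nsim\mathsf{Tr.}$, which fix the position of the middle formula. The auxiliary observation about $\mathfrak{m}$-labelling is the only piece that is not purely mechanical, but it is really a routine invariant of the rule set and poses no serious obstacle; everything else is a two-step derivation from the premises already on the branch.
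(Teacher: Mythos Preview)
Your proof is correct and follows essentially the same route as the paper's: obtain $\mathfrak{m}[\neg\phi]$ from the presence of $\neg\phi\sim\chi$ (resp.\ $\neg\phi\nsim\chi$), apply $(\mathfrak{m}\neg)$ to extract $\phi\sim\neg\phi$, and then chain via the generalised transitivity rules. The paper's version is terser, simply asserting that $\mathfrak{m}[\neg\phi]$ must be present and invoking $\sim\mathsf{Tr.}$ directly; your explicit invariant about $\mathfrak{m}$-labels accompanying every $\sim$/$\nsim$ pair and your careful symmetry bookkeeping for the $\nsim$ case are welcome elaborations but not a different idea.
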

\begin{proof}
Indeed, if a branch contains $\neg\phi\sim\chi$, it also contains $\mathfrak{m}[\neg\phi]$. From here we infer $\mathfrak{m}[\phi]$ and $\phi\sim\neg\phi$. Now, by $\sim\mathsf{Tr.}$ and $\sim\mathsf{trans}$ rules, we obtain $\phi\sim\chi$.

The proof of the $\neg\nsim$ case is the same.
\end{proof}
\subsection{Correctness and completeness}\label{nDETLadequacy}
Observe that in our examples on both Fig.~\ref{goodtableau} and~\ref{badtableau} we tested only whether there is a valuation $v$ such that for a sequent $\phi\vdash\chi$ $v(\phi)=\mathbf{T}$ and $v(\chi)\in\{\mathbf{B},\mathbf{0},\mathbf{N}\}$. So, a case of a valuation $v'$ such that $v'(\phi)=\mathbf{T}$ and $v'(\chi)=\mathbf{F}$ was left unconsidered. While for the example in Fig.~\ref{badtableau} considering only one of two options was enough to refute the sequent, to prove a given sequent we have to tackle both of them.
	
Before we start proving the correctness and completeness of our tableaux, we need a definition of ‘realisation’ which we adapt from~\cite{DAgostino1990}.
\begin{definition}[Realisation]
We say that a valuation $v$ realises a labelled formula or a labelled pair of formulas in the following cases.
\begin{enumerate}
\item $\mathfrak{t}[\phi]$ and $v(\phi)=\mathbf{T}$.
\item $\mathfrak{m}[\phi]$ and $v(\phi)\in\{\mathbf{B},\mathbf{0},\mathbf{N}\}$.
\item $\mathfrak{f}[\phi]$ and $v(\phi)=\mathbf{F}$.
\item $\phi\sim\chi$ and $v(\phi)=v(\chi)$ and $v(\phi),v(\chi)\in\{\mathbf{B},\mathbf{0},\mathbf{N}\}$.
\item $\phi\nsim\chi$ and $v(\phi)\neq v(\chi)$ and $v(\phi),v(\chi)\in\{\mathbf{B},\mathbf{0},\mathbf{N}\}$.
\end{enumerate}
		
We say that a branch of a tableau is realised iff its every node is realised.
\end{definition}
\begin{theorem}[Correctness for $\mathrm{ETL}_{\mathbf{M3}}$]\label{M3correctness}
If there is a tableaux proof for $\phi\vdash_{\mathrm{ETL}_{\mathbf{M3}}}\chi$, then $\phi\vDash_{\mathrm{ETL}_{\mathbf{M3}}}\chi$.
\end{theorem}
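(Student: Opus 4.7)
The plan is to proceed by contraposition using the standard method of realizations. Assume $\phi\not\vDash_{\mathrm{ETL}_{\mathbf{M3}}}\chi$, so there is a valuation $v$ with $v(\phi)=\mathbf{T}$ and $v(\chi)\neq\mathbf{T}$. Then either $v(\chi)=\mathbf{F}$, in which case $v$ realises the initial node $\{\mathfrak{t}[\phi],\mathfrak{f}[\chi]\}$, or $v(\chi)\in\{\mathbf{B},\mathbf{0},\mathbf{N}\}$, in which case $v$ realises $\{\mathfrak{t}[\phi],\mathfrak{m}[\chi]\}$. The goal is then to show that one of the two tableaux required by Definition~\ref{M3tableaux} fails to close, contradicting the existence of a tableau proof.

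The core of the argument is a \emph{preservation lemma}: if a branch $\mathcal{B}$ is realised by $v$ and a rule is applied to premises in $\mathcal{B}$, then at least one of the resulting sub-branches is again realised by $v$. I would verify this rule by rule. The non-branching rules $(\mathfrak{t}\wedge)$, $(\mathfrak{t}\neg)$, $(\mathfrak{f}\vee)$, $(\mathfrak{f}\neg)$ and the symmetry/transitivity rules for $\sim,\nsim$ are immediate from the $\mathbf{M3}$ truth tables and from the properties of equality. For the branching rules $(\mathfrak{t}\vee)$, $(\mathfrak{m}\wedge)$, $(\mathfrak{m}\vee)$, $(\mathfrak{f}\wedge)$, inspection of the meet and join tables on $\mathbf{M3}$ shows that the three listed sub-branches jointly cover every possibility; the crucial observation is that in $\mathbf{M3}$, two distinct middle values join to $\top$ and meet to $\bot$, whereas equal middle values join and meet to themselves. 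For $(\mathfrak{m}\neg)$ and for $(\neg\sim)$, $(\neg\nsim)$ I would appeal to the fact that negation fixes every element of $\{\mathbf{B},\mathbf{0},\mathbf{N}\}$. Finally, $(\circ\sim)$ and $(\circ\nsim)$ use the same $\mathbf{M3}$ observation: if $v(\chi_1\circ\chi_2)$ and $v(\chi_i)$ both lie in $\{\mathbf{B},\mathbf{0},\mathbf{N}\}$ then necessarily $v(\chi_1\circ\chi_2)=v(\chi_i)$, so $\phi\sim\chi_1\circ\chi_2$ (respectively $\phi\nsim\chi_1\circ\chi_2$) transfers to $\phi\sim\chi_i$ (respectively $\phi\nsim\chi_i$).

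The second ingredient is that a realised branch cannot satisfy any of the four closure conditions. Condition (1) is ruled out because $v$ is a function; (2) because $v(\phi)=v(\chi)$ and $v(\phi)\neq v(\chi)$ are incompatible; (3) because no value differs from itself; and (4) by the pigeonhole principle, since four pairwise distinct values cannot fit in the three-element set $\{\mathbf{B},\mathbf{0},\mathbf{N}\}$. Combining the two ingredients, from whichever initial node $v$ realises we repeatedly choose, at each rule application, a sub-branch that remains realised. Since the tableau terminates (as noted after Definition~\ref{M3tableaux}), this procedure produces a complete branch that is realised, hence open. So that tableau does not close, contradicting the assumption that both tableaux needed for a proof of $\phi\vdash_{\mathrm{ETL}_{\mathbf{M3}}}\chi$ are closed.

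The main obstacle will be the preservation lemma, and in particular the branching rules $(\mathfrak{t}\vee)$, $(\mathfrak{m}\wedge)$, $(\mathfrak{m}\vee)$, $(\mathfrak{f}\wedge)$ together with $(\circ\sim)$ and $(\circ\nsim)$: each demands a careful enumeration of cases over the $\mathbf{M3}$ tables to confirm that no combination of values escapes the listed sub-branches. The closure clause (4), which looks idiosyncratic, is exactly what the three-label regime $\{\mathfrak{t},\mathfrak{m},\mathfrak{f}\}$ requires to rule out the configurations the tableau cannot otherwise detect, and its justification via pigeonhole is what pins down the role of the three middle elements.
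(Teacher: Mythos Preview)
Your proposal is correct and follows precisely the route the paper takes: the paper's proof consists of the same two ingredients you isolate---that realisation is preserved under each rule (your preservation lemma) and that no valuation can realise a closed branch---stated there in a single sentence each. Your contrapositive framing and the case analyses you sketch are exactly what is needed to unpack the paper's terse argument, so nothing further is required.
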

\begin{proof}
It is easy to show that if premise(s) of a rule is realisable, then so is at least one of its conclusions. Finally, it is evident that no valuation can realise a closed branch. This concludes the proof.
\end{proof}
	
To prove completeness theorem we adapt the technique from~\cite[Theorem~5.4.1]{DAgostino1990}.
\begin{theorem}[Completeness for $\mathrm{ETL}_{\mathbf{M3}}$]\label{M3completeness}
If $\phi\vDash_{\mathrm{ETL}_{\mathbf{M3}}}\chi$, then there is a tableaux proof for $\phi\vdash_{\mathrm{ETL}_{\mathbf{M3}}}\chi$.
\end{theorem}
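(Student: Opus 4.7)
The plan is to argue by contraposition, adapting D'Agostino's technique. Assume no tableau proof for $\phi \vdash_{\mathrm{ETL}_{\mathbf{M3}}} \chi$ exists. By Definition~\ref{M3tableaux}, at least one of the two initial tableaux---either the one starting from $\{\mathfrak{t}[\phi], \mathfrak{m}[\chi]\}$ or the one starting from $\{\mathfrak{t}[\phi], \mathfrak{f}[\chi]\}$---must contain a complete open branch $\mathcal{B}$, since the rules decrease formula complexity and thus always terminate. I will extract from $\mathcal{B}$ a valuation $v$ realising every labelled formula and every labelled pair along $\mathcal{B}$. Because either $\{\mathfrak{t}[\phi], \mathfrak{m}[\chi]\}$ or $\{\mathfrak{t}[\phi], \mathfrak{f}[\chi]\}$ sits at the root, such a $v$ will satisfy $v(\phi) = \mathbf{T}$ and $v(\chi) \neq \mathbf{T}$, refuting $\phi \vDash_{\mathrm{ETL}_{\mathbf{M3}}} \chi$.

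Define $v$ on propositional variables as follows. If $\mathfrak{t}[p] \in \mathcal{B}$, set $v(p) = \mathbf{T}$; if $\mathfrak{f}[p] \in \mathcal{B}$, set $v(p) = \mathbf{F}$; conflicts between these are precluded by closure condition~1. For variables labelled $\mathfrak{m}$, the rules $(\sim\mathsf{sym})$, $(\sim\mathsf{trans})$, and $(\nsim\mathsf{trans})$, together with closure conditions~2 and~3, allow the $\mathfrak{m}$-labelled variables to be partitioned into equivalence classes modulo $\sim$, while $\nsim$ yields a graph on these classes. Closure condition~4 forbids four pairwise $\nsim$-distinct classes on an open branch, so the class graph may be properly coloured using only the three values $\mathbf{B}, \mathbf{0}, \mathbf{N}$, each class receiving one of them. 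Variables not mentioned in $\mathcal{B}$ are assigned an arbitrary default.

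The verification that $v$ realises all of $\mathcal{B}$ proceeds by induction on the complexity of labelled formulas and labelled pairs. The base case for labelled variables and labelled pairs of variables is immediate from the construction. For the inductive step on labelled compound formulas, completeness of $\mathcal{B}$ ensures that for each such item at least one conclusion-set of the corresponding rule appears in $\mathcal{B}$; applying the inductive hypothesis to the simpler items and consulting the $\mathbf{M3}$ truth tables for $\wedge, \vee, \neg$ recovers realisation of the compound item. The main obstacle is the inductive step for compound labelled pairs $\phi \sim \chi$ and $\phi \nsim \chi$: one must use $(\neg\sim), (\circ\sim), (\neg\nsim), (\circ\nsim)$ to propagate the relation down to subformulas whose $\mathfrak{m}$-values are already fixed by induction, and then invoke the admissibility of the general transitivity rules established in Lemma~\ref{cutadmissibility} to guarantee that the equivalence-class/colouring structure extends consistently from variables to arbitrary $\mathfrak{m}$-labelled formulas. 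This step is what excludes a clash between the values that $v$ assigns, by induction, to two compound $\mathfrak{m}$-labelled formulas related by $\sim$ or $\nsim$ in $\mathcal{B}$, and thereby closes the argument.
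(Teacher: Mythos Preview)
Your strategy matches the paper's almost exactly: contraposition, read off a valuation from a complete open branch (sending $\mathfrak{t}/\mathfrak{f}$-labelled variables to $\mathbf{T}/\mathbf{F}$ and distributing $\mathfrak{m}$-labelled variables over $\{\mathbf{B},\mathbf{0},\mathbf{N}\}$ subject to the $\sim/\nsim$ constraints), then climb back up by induction on complexity, invoking Lemma~\ref{cutadmissibility} to handle compound $\sim/\nsim$ pairs. The paper does the same, except that it describes the middle-value assignment as an explicit sequential sweep rather than in graph-colouring language.

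That said, your colouring justification contains a gap. You argue that closure condition~(4) rules out four pairwise $\nsim$-distinct classes and conclude that the quotient $\nsim$-graph is therefore $3$-colourable. This inference is invalid in general: $K_4$-free graphs can have arbitrarily large chromatic number (Mycielski's construction already supplies triangle-free, hence $K_4$-free, graphs of chromatic number $4$ and higher), so condition~(4) alone does not license a proper $3$-colouring. The paper does not lean on any such general graph-theoretic fact; instead it writes out a concrete greedy procedure --- assign $\mathbf{B}$ to the first $\mathfrak{m}$-variable and to every variable not $\nsim$-related to it, then $\mathbf{0}$ to the first remaining one and to every variable not $\nsim$-related to \emph{that}, then $\mathbf{N}$ similarly --- and observes that this realises all atomic $\sim/\nsim$ labels. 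If you wish to retain the colouring phrasing, you need either a separate argument that the particular $\nsim$-graphs arising from complete open branches carry enough extra structure to force chromatic number at most $3$, or you should revert to the paper's explicit procedure and verify directly that every $\sim$ and $\nsim$ label on variables in $\mathcal{B}$ is respected.
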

\begin{proof}
We prove by contraposition: we will show that each complete open branch is realisable. Now, having a complete open branch $\mathcal{B}$ assign to each labelled variable $p$ occurring in $\mathcal{B}$ a value from $\{\mathbf{T},\mathbf{B},\mathbf{0},\mathbf{N},\mathbf{F}\}$ as follows.
\begin{enumerate}
\item If $\mathfrak{t}[p]\in\mathcal{B}$, then $v(p)=\mathbf{T}$.
\item If $\mathfrak{f}[p]\in\mathcal{B}$, then $v(p)=\mathbf{F}$.
\item If $\mathfrak{m}[p]\in\mathcal{B}$, we proceed as follows.
\begin{enumerate}
\item First, find all variables labelled $\mathfrak{m}$ and order them alphabetically (we will denote such variables $q_1,\ldots,q_n$).
\item Set $v(q_1)=\mathbf{B}$. Then find all $q_i$ such that $q_i\nsim q_1\notin\mathcal{B}$ and set $v(q_i)=\mathbf{B}$ for each of them.
\item Now let $q_j$ be the first variable such that $q_j\nsim q_1$ occurs in $\mathcal{B}$. Set $v(q_j)=\mathbf{0}$. Then find all $q_{j'}$ such that $q_{j'}\nsim q_j$ does not occur in $\mathcal{B}$. Set $v(q_{j'})=\mathbf{0}$ for each of them.
\item Now let $q_k$ be the first variable such that $q_k\nsim q_1$ and $q_k\nsim q_j$ are in $\mathcal{B}$. Set $v(q_k)=\mathbf{N}$. Then find all $q_{k'}$ such that $q_{k'}\nsim q_k\notin\mathcal{B}$. Set $v(q_{k'})=\mathbf{N}$ for each of them.
\end{enumerate}
\end{enumerate}
		
Notice that it is the rules $\sim\mathsf{trans}$ and $\nsim\mathsf{trans}$ that enable us to search for all labelled variables that take the same (or different) value as the one we took first. Notice also that due to Lemma~\ref{cutadmissibility} $\mathcal{B}$ cannot contain $\phi\sim\chi$, $\chi\sim\psi$ and $\phi\nsim\psi$ (or $\phi\nsim\chi$, $\chi\sim\psi$ and $\phi\sim\psi$), lest it would be closed (and, hence, unrealisable).
		
Observe now that $v$ defined as above realises all labelled variables and labelled pairs of variables occurring in~$\mathcal{B}$. Then note that for each $\alpha$-rule decomposing labelled formulas if $v$ realises all its conclusions, $v$ also realises its premise. Further, for each $\beta$-rule decomposing labelled formulas if $v$ realises all its conclusions in one of its resulting branches, $v$ also realises the premise.
		
In $\sim\mathsf{trans}$ and $\nsim\mathsf{trans}$ we have by construction of $v$ that $v$ realises both premises if it realises the conclusion.
		
Other rules dealing with labelled pairs of formulas are invertible in the sense that if $v$ realises the conclusion, it also realises the premises.
		
In particular, consider the $\circ\sim$ rule with $\circ=\vee$. By induction hypothesis, $v$ realises $\phi\sim\chi_1$ and we also have $\mathfrak{m}[\chi_1]$ and $\phi\sim\chi_1\vee\chi_2$ in $\mathcal{B}$. Since $\mathcal{B}$ is complete, we also have $\mathfrak{m}[\phi]$ and $\mathfrak{m}[\chi_1\vee\chi_2]$. Furthermore, $v$ realises $\mathfrak{m}[\phi]$ and $\mathfrak{m}[\chi_1]$. Again, by completeness of $\mathcal{B}$, we also have either $\mathfrak{m}[\chi_2]$ and $\chi_1\sim\chi_2$ or $\mathfrak{f}[\chi_2]$. In the first case we obtain that $v$ realises $\chi_1\sim\chi_2$ by induction hypothesis and in the second case we obtain that $v$ realises $\mathfrak{f}[\chi_2]$. In both cases, it is straightforward to see that $v$ realises $\phi\sim\chi_1\vee\chi_2$.
		
The result follows by an induction on the complexity of labelled formulas and pairs of formulas in $\mathcal{B}$.
\end{proof}
The proofs of the following two theorems are the same as proofs of Theorems~\ref{M3correctness} and~\ref{M3completeness}, respectively.
\begin{theorem}[Correctness for $\mathrm{NFL}_{\mathbf{M3}}$]\label{M3NFLcorrectness}
If there is a tableaux proof for $\phi\vdash_{\mathrm{NFL}_{\mathbf{M3}}}\chi$, then $\phi\vDash_{\mathrm{NFL}_{\mathbf{M3}}}\chi$.
\end{theorem}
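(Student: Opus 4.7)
The plan is to mirror the proof of Theorem~\ref{M3correctness} almost verbatim, adjusting only the pair of initial tableau roots and the definition of ``refuting valuation''. The key structural fact one needs is that every rule of Definition~\ref{M3tableaux} \emph{preserves realisability downwards}: if a valuation $v$ realises the premise(s) of an application, then $v$ realises all the conclusions of an $\alpha$-rule and at least one conclusion-set of a $\beta$-rule. This is checked rule by rule by consulting the truth tables for $\wedge$, $\vee$, $\neg$ on $\mathbf{M3}$ together with the interpretation of $\mathfrak{t}, \mathfrak{m}, \mathfrak{f}, \sim, \nsim$ fixed by the realisation definition; no rule has changed, so the verification is exactly the one used for Theorem~\ref{M3correctness}.

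Next I would argue that no valuation realises a closed branch. Each of the four closure conditions is manifestly unrealisable: (1) a variable cannot have two distinct values under the same $v$; (2) $\phi\sim\chi$ and $\phi\nsim\chi$ together demand $v(\phi)=v(\chi)$ and $v(\phi)\neq v(\chi)$; (3) $\phi\nsim\phi$ similarly contradicts itself; (4) the four pairwise-$\nsim$ assertions would require four pairwise-distinct elements of $\{\mathbf{B},\mathbf{0},\mathbf{N}\}$, which has only three. Combining these two observations yields: if a tableau starting from some root $R$ is closed, no valuation realises $R$.

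The final step assembles the claim by contraposition. Suppose $\phi\nvDash_{\mathrm{NFL}_{\mathbf{M3}}}\chi$; pick a witness $v$ with $v(\phi)\neq\mathbf{F}$ and $v(\chi)=\mathbf{F}$. Then $v(\phi)\in\{\mathbf{T},\mathbf{B},\mathbf{0},\mathbf{N}\}$, so either $v(\phi)=\mathbf{T}$, in which case $v$ realises $\{\mathfrak{t}[\phi],\mathfrak{f}[\chi]\}$, or $v(\phi)\in\{\mathbf{B},\mathbf{0},\mathbf{N}\}$, in which case $v$ realises $\{\mathfrak{m}[\phi],\mathfrak{f}[\chi]\}$. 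By the downward-preservation property, in whichever of the two tableaux has a realisable root, some branch remains realisable throughout the construction and therefore stays open by the preceding step. Hence one of the two tableaux required by Definition~\ref{M3tableaux} for an NFL-proof fails to close, and there is no tableaux proof of $\phi\vdash_{\mathrm{NFL}_{\mathbf{M3}}}\chi$.

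I do not anticipate any genuine obstacle: the rules, the labels, the lattice $\mathbf{M3}$, and the closure conditions are shared with the ETL case, so the only genuinely NFL-specific content is the bookkeeping in the last paragraph, where the designated set $\{\mathbf{T},\mathbf{B},\mathbf{0},\mathbf{N}\}$ is partitioned into the two cases handled by the two initial roots $\{\mathfrak{t}[\phi],\mathfrak{f}[\chi]\}$ and $\{\mathfrak{m}[\phi],\mathfrak{f}[\chi]\}$. The only point that deserves a moment of care is the preservation check for $(\mathfrak{m}\vee)$ and $(\mathfrak{m}\wedge)$, since these embed the non-distributive behaviour of $\mathbf{M3}$; but this is exactly the check already performed for Theorem~\ref{M3correctness} and needs no modification.
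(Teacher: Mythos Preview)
Your proposal is correct and follows exactly the approach the paper takes: the paper simply states that the proof is the same as that of Theorem~\ref{M3correctness}, which in turn amounts to checking that every rule preserves realisability downwards and that no valuation realises a closed branch. Your additional bookkeeping---splitting the refuting valuation between the two NFL roots $\{\mathfrak{t}[\phi],\mathfrak{f}[\chi]\}$ and $\{\mathfrak{m}[\phi],\mathfrak{f}[\chi]\}$---is precisely the dualisation the paper leaves implicit.
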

\begin{theorem}[Completeness for $\mathrm{NFL}_{\mathbf{M3}}$]\label{M3NFLcompleteness}
If $\phi\vDash_{\mathrm{NFL}_{\mathbf{M3}}}\chi$, then there is a tableaux proof for $\phi\vdash_{\mathrm{NFL}_{\mathbf{M3}}}\chi$.
\end{theorem}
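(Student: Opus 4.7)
The plan is to mirror the proof of Theorem~\ref{M3completeness} by contraposition. Assume there is no tableaux proof of $\phi\vdash_{\mathrm{NFL}_{\mathbf{M3}}}\chi$; by Definition~\ref{M3tableaux}, at least one of the two tableaux starting with $\{\mathfrak{m}[\phi],\mathfrak{f}[\chi]\}$ or $\{\mathfrak{t}[\phi],\mathfrak{f}[\chi]\}$ fails to close, and hence (since every tableau terminates) contains a complete open branch~$\mathcal{B}$. I will show that any such $\mathcal{B}$ is realised by some valuation~$v$. Then $v(\chi)=\mathbf{F}$ while $v(\phi)\in\{\mathbf{T}\}\cup\{\mathbf{B},\mathbf{0},\mathbf{N}\}$, so $v(\phi)\neq\mathbf{F}$ whereas $v(\chi)=\mathbf{F}$, witnessing $\phi\nvDash_{\mathrm{NFL}_{\mathbf{M3}}}\chi$.

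The construction of $v$ on propositional variables is verbatim the one from Theorem~\ref{M3completeness}: variables labelled $\mathfrak{t}$ or $\mathfrak{f}$ are sent to $\mathbf{T}$ or $\mathbf{F}$, and variables labelled $\mathfrak{m}$ are partitioned into at most three classes via the alphabetical $\sim/\nsim$ procedure, receiving values $\mathbf{B},\mathbf{0},\mathbf{N}$ in turn. Closure condition~(\ref{condition4}) ensures that three classes always suffice on an open branch, while Lemma~\ref{cutadmissibility} together with the transitivity rules $\sim\mathsf{trans}$ and $\nsim\mathsf{trans}$ guarantees that the $\sim/\nsim$-graph restricted to $\mathcal{B}$ is consistent, so $v$ is well-defined and realises every labelled variable and every labelled pair of variables in~$\mathcal{B}$.

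Next, by induction on the complexity of labelled formulas and labelled pairs occurring in $\mathcal{B}$, one verifies that $v$ realises every node of $\mathcal{B}$. For the $\alpha$-rules decomposing labelled formulas, realisation of the single conclusion forces realisation of the premise. For the $\beta$-rules, completeness of $\mathcal{B}$ supplies one whole set of conclusions in $\mathcal{B}$, and that set is realised by induction hypothesis, hence so is the premise. The inductive treatment of labelled pairs such as $\phi\sim\chi_1\vee\chi_2$ proceeds exactly as in the proof of Theorem~\ref{M3completeness}, using the $(\circ\sim)$ and $(\circ\nsim)$ rules together with the completeness of $\mathcal{B}$ to recover the required auxiliary labels.

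The only substantive difference with the ETL case lies in the choice of starting labels: for $\mathrm{NFL}_{\mathbf{M3}}$ a refuting valuation is one with $v(\phi)\neq\mathbf{F}$ and $v(\chi)=\mathbf{F}$, which is why the two initial sets are $\{\mathfrak{t}[\phi],\mathfrak{f}[\chi]\}$ and $\{\mathfrak{m}[\phi],\mathfrak{f}[\chi]\}$. Since the tableau rules and the closure conditions are entirely symmetric in $\mathbf{T}$ and $\mathbf{F}$, no new machinery is needed, and the expected obstacle—showing that the realisation argument carries over unchanged—reduces to verifying this symmetry at the initial nodes. That verification is immediate, giving the claim.
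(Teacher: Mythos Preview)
Your proposal is correct and takes essentially the same approach as the paper, which simply notes that the proof is the same as that of Theorem~\ref{M3completeness}. You have spelled out in appropriate detail what this means---contraposition, the identical construction of $v$ from a complete open branch, and the sole difference residing in the initial labelling $\{\mathfrak{t}[\phi],\mathfrak{f}[\chi]\}$ and $\{\mathfrak{m}[\phi],\mathfrak{f}[\chi]\}$ dictated by the $\mathrm{NFL}_{\mathbf{M3}}$ entailment relation.
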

\section{Generalisation}\label{generalisation}
In this section we will show how to generalise our tableaux system on ETL- and NFL-like logics built upon arbitrary \textbf{Mn} lattices.
\subsection{Tuning the tableaux}\label{tuning}
Observe that \textbf{Mn} lattices differ only in the number of pairwise incomparable w.r.t.\ ‘upwards’ order elements, namely, there are $n$ such elements for each lattice (except for the three-element one where any two elements are comparable). Since we will always use only the labels and rules from Definition~\ref{M3tableaux}, in contrast to the traditional approach from~\cite{Salzer1996CADE,BaazFermullerSalzerZach1996,BaazFermullerSalzer2000} and~\cite{MarcelinoCaleiro2019} which demands the number of labels and rules depending on the number of truth values, we can adjust our tableaux as follows.
\begin{definition}[Tableaux for ETL- and NFL-like logics on \textbf{Mn} lattices]\label{Mntableaux}
We define tableaux and all rules for them as in Definition~\ref{M3tableaux}. The only difference is in condition~(\ref{condition4}) for branch closure. Namely, for each \textbf{Mn} lattice set
\begin{enumerate}
\item[($4'$)\label{condition4'}] There are $\phi_1$, \ldots, $\phi_{n+1}$ in the branch such that for all $i,j\leqslant n+1$ if $i\neq j$, then $\phi_i\nsim\phi_j$\footnote{For LP and Kleene's logic this reduces to the following: the branch is closed if it contains $\phi\nsim\chi$.}.
\end{enumerate}
		
The notion of a tableaux proof for $\phi\vdash_{\mathrm{ETL}_{\mathbf{Mn}}}\chi$ and $\phi\vdash_{\mathrm{NFL}_{\mathbf{Mn}}}\chi$ also coincides with that from Definition~\ref{M3tableaux}.
\end{definition}
	
The following theorems are easy generalisations of Theorems~\ref{M3correctness}, \ref{M3completeness}, \ref{M3NFLcorrectness} and~\ref{M3NFLcompleteness}.
\begin{theorem}\label{ETLMncompleteness}
For any lattice $\mathbf{Mn}$ and its ETL-like logic $\phi\vDash_{\mathrm{ETL}_{\mathbf{Mn}}}\chi$ iff there is a~tableaux proof of $\phi\vdash_{\mathrm{ETL}_{\mathbf{Mn}}}\chi$.
\end{theorem}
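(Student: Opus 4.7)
The plan is to adapt the proofs of Theorems~\ref{M3correctness} and~\ref{M3completeness} almost verbatim: since the labels and the decomposition/closure rules listed in Definition~\ref{Mntableaux} are inherited from Definition~\ref{M3tableaux}, only the two ingredients tied to the number of middle elements need genuine attention---namely, the new closure condition $(4')$ in the soundness direction and the construction of a realising valuation with $n$ middle values in the completeness direction.

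For the soundness half ($\Leftarrow$), I would first argue, exactly as in the proof of Theorem~\ref{M3correctness}, that each decomposition rule preserves realisability: if a valuation $v$ realises the premises then $v$ realises the conclusion of every $\alpha$-rule and at least one branch of every $\beta$-rule. The only new verification concerns the closure clause $(4')$. Suppose a branch contains formulas $\phi_1,\ldots,\phi_{n+1}$ with $\phi_i\nsim\phi_j$ for all $i\neq j$. By the definition of realisation, a realising valuation $v$ would have to map each $\phi_i$ to a value in the middle level of $\mathbf{Mn}$ and do so injectively. Since the middle level has only $n$ elements, a pigeonhole argument rules this out, so closed branches remain unrealisable. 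The remaining closure clauses (1)--(3) are independent of $n$ and carry over without change. Hence a tableau closure of both starting configurations $\{\mathfrak{t}[\phi],\mathfrak{m}[\chi]\}$ and $\{\mathfrak{t}[\phi],\mathfrak{f}[\chi]\}$ forces $\phi\vDash_{\mathrm{ETL}_{\mathbf{Mn}}}\chi$.

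For the completeness half ($\Rightarrow$), I would prove the contrapositive and show that every complete open branch $\mathcal{B}$ is realised by some valuation $v$. Variables labelled $\mathfrak{t}$ or $\mathfrak{f}$ are assigned $\mathbf{T}$ or $\mathbf{F}$ as in the $\mathbf{M3}$ proof. For the $\mathfrak{m}$-labelled variables $q_1,\ldots,q_m$ (ordered, say, alphabetically), I would iterate the partitioning procedure from Theorem~\ref{M3completeness}: at stage $k$, pick the least-indexed as-yet-unclassified variable $q_{i_k}$ that is $\nsim$-related in $\mathcal{B}$ to the representatives chosen at stages $1,\ldots,k-1$, place $q_{i_k}$ together with every not-yet-classified $q_j$ such that $q_j\nsim q_{i_k}\notin\mathcal{B}$ into class~$k$, and assign this class the $k$-th middle element of $\mathbf{Mn}$. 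Because $\mathcal{B}$ is open, condition $(4')$ fails, so no $n{+}1$ variables are pairwise $\nsim$-related on $\mathcal{B}$; consequently the procedure terminates within $n$ stages and exhausts the available middle values. Lemma~\ref{cutadmissibility} is what ensures the well-definedness of the partition: it precludes simultaneous occurrences of $\phi\sim\chi$, $\chi\sim\psi$ together with $\phi\nsim\psi$ on an open branch, so the transitivity rules $\sim\mathsf{trans}$ and $\nsim\mathsf{trans}$ guarantee that the class assigned to a variable does not depend on which representative of its $\sim$-equivalence class is consulted.

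Once $v$ is defined on variables, the fact that $v$ realises every labelled formula and every labelled pair of formulas on $\mathcal{B}$ follows by induction on complexity, with the inductive step for $\alpha$- and $\beta$-rules and for the pair-decomposition rules $(\neg\sim),(\neg\nsim),(\circ\sim),(\circ\nsim)$ being literally the same as in Theorem~\ref{M3completeness}, because none of these rules refer to $n$. Since no closure condition applies to $\mathcal{B}$ and both starting sets $\{\mathfrak{t}[\phi],\mathfrak{m}[\chi]\}$ and $\{\mathfrak{t}[\phi],\mathfrak{f}[\chi]\}$ are then realisable by valuations witnessing, respectively, $v(\phi)=\mathbf{T}$ with $v(\chi)\in\{\text{middle}\}$ or $v(\chi)=\mathbf{F}$, we obtain $\phi\nvDash_{\mathrm{ETL}_{\mathbf{Mn}}}\chi$, completing the contrapositive. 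The main obstacle, and the only genuinely new piece of work compared with the $n=3$ case, is justifying that the iterative partitioning always fits into $n$ classes precisely when $(4')$ fails; this is where the pigeonhole argument from the soundness direction and Lemma~\ref{cutadmissibility} must be combined carefully.
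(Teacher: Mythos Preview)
Your plan matches the paper's exactly: the paper proves this theorem in a single line (``easy generalisations of Theorems~\ref{M3correctness}--\ref{M3NFLcompleteness}''), and you have spelled out precisely that generalisation, with the pigeonhole use of $(4')$ for soundness and the extension of the class-partitioning procedure to $n$ middle values for completeness.

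There is, however, a gap in the partitioning step that is worth flagging (and it is already latent in the paper's proof of Theorem~\ref{M3completeness} that you are generalising). Your stage-$k$ rule places into class $k$ every not-yet-classified $q_j$ with $q_j\nsim q_{i_k}\notin\mathcal{B}$. But two such variables $q_j,q_{j'}$ may themselves satisfy $q_j\nsim q_{j'}\in\mathcal{B}$, and assigning them the same middle value then fails to realise that label. A concrete instance: from $\{\mathfrak{t}[q\vee r],\mathfrak{m}[p]\}$ the third option of $(\mathfrak{t}\vee)$ yields the complete open branch containing $\mathfrak{m}[p],\mathfrak{m}[q],\mathfrak{m}[r],q\nsim r$; here $p$ is the first representative, neither $q\nsim p$ nor $r\nsim p$ occurs, and the procedure sends $p,q,r$ all to the same class. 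Lemma~\ref{cutadmissibility} does not rescue this, since it governs clashes between a present $\sim$ and a present $\nsim$, not between an \emph{absent} $\nsim$ and a present one. More seriously, for general $n$ the failure of $(4')$ says only that the $\nsim$-graph on $\mathcal{B}$ has no $(n{+}1)$-clique, and clique number at most $n$ does not imply $n$-colourability (e.g.\ a triangle-free $4$-chromatic graph such as the Gr\"otzsch graph for $n=3$). So the assertion that the partition ``fits into $n$ classes'' while respecting every $\nsim$ on $\mathcal{B}$ needs an additional structural argument about which $\nsim$-graphs can actually arise on complete branches, or else a weaker strategy than ``every complete open branch is realisable.''
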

\begin{theorem}\label{NFLMncompleteness}
For any lattice $\mathbf{Mn}$ and its NFL-like logic $\phi\vDash_{\mathrm{NFL}_{\mathbf{Mn}}}\chi$ iff there is a~tableaux proof of $\phi\vdash_{\mathrm{NFL}_{\mathbf{Mn}}}\chi$.
\end{theorem}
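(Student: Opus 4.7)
The plan is to prove the biconditional by adapting the arguments of Theorems~\ref{M3NFLcorrectness} and~\ref{M3NFLcompleteness} essentially unchanged, with the sole modification that the role of the closure condition for three midpoints is now played by condition~($4'$) for $n$ midpoints. The underlying notion of realisation extends verbatim, except that clauses~4 and~5 now require $v(\phi), v(\chi)$ to lie among the $n$ pairwise incomparable midpoints $\mathbf{1},\ldots,\mathbf{n}$ of $\mathbf{Mn}$. Tableaux for $\phi \vdash_{\mathrm{NFL}_{\mathbf{Mn}}} \chi$ start with the two root sets $\{\mathfrak{m}[\phi],\mathfrak{f}[\chi]\}$ and $\{\mathfrak{t}[\phi],\mathfrak{f}[\chi]\}$, exactly as in the $\mathbf{M3}$ case.

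For the soundness direction, I would first verify that every rule preserves realisability in the usual sense: if some valuation $v$ realises the premises of an $\alpha$-rule, then $v$ realises its (single) conclusion set, and if $v$ realises the premises of a $\beta$-rule, then $v$ realises at least one of the resulting conclusion sets. The only genuinely new observation is that a branch closed by condition~($4'$) is unrealisable: if the branch contained $\phi_1,\ldots,\phi_{n+1}$ pairwise related by $\nsim$, then a realising valuation would have to map them to $n+1$ pairwise distinct midpoints of $\mathbf{Mn}$, contradicting the fact that $\mathbf{Mn}$ has only $n$ such elements. Combining these facts, if both tableaux for $\phi \vdash_{\mathrm{NFL}_{\mathbf{Mn}}} \chi$ close, then no valuation can realise either root; but a counterexample $v$ with $v(\phi) \neq \mathbf{F}$ and $v(\chi) = \mathbf{F}$ would realise one of the roots (depending on whether $v(\phi) = \mathbf{T}$ or $v(\phi)$ is a midpoint), so no such $v$ exists, giving $\phi \vDash_{\mathrm{NFL}_{\mathbf{Mn}}} \chi$.

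For completeness I would argue by contraposition, following the construction of Theorem~\ref{M3completeness}: assume at least one of the two tableaux has a complete open branch $\mathcal{B}$, and exhibit a valuation $v$ realising $\mathcal{B}$ and thus its root. Variables labelled $\mathfrak{t}$ or $\mathfrak{f}$ receive $\mathbf{T}$ or $\mathbf{F}$, and variables labelled $\mathfrak{m}$ are handled by a greedy scheme generalising the one for $\mathbf{M3}$: list the $\mathfrak{m}$-labelled variables in some order, set the first to $\mathbf{1}$, propagate $\mathbf{1}$ to every variable not $\nsim$-related to it, then pick the next unassigned variable and assign $\mathbf{2}$, propagating as before, and so on. Lemma~\ref{cutadmissibility} (which carries over verbatim, being independent of the number of midpoints) guarantees that this partition is consistent, i.e.\ no two variables in the same class are $\nsim$-related and no two variables in different classes are $\sim$-related. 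Because $\mathcal{B}$ is open under condition~($4'$), this process produces at most $n$ classes, so the values $\mathbf{1},\ldots,\mathbf{n}$ of $\mathbf{Mn}$ suffice.

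The main obstacle is verifying by induction on the complexity of the labelled formulas in $\mathcal{B}$ that the valuation $v$ so defined really does realise every labelled formula and labelled pair in $\mathcal{B}$; this is where one uses completeness of the branch to walk back from realised conclusions of each rule to realisation of the premise, exactly as in the $\circ\!\sim\!$ case spelled out in the proof of Theorem~\ref{M3completeness}. Since the rules of Definition~\ref{Mntableaux} are the same as those of Definition~\ref{M3tableaux} and the number of midpoints only enters through condition~($4'$) and through the cardinality of the set of available middle values, the inductive argument goes through without modification. Having realised the root of $\mathcal{B}$, we conclude that either there is a valuation with $v(\phi) \in \{\mathbf{1},\ldots,\mathbf{n}\}$ and $v(\chi) = \mathbf{F}$, or one with $v(\phi) = \mathbf{T}$ and $v(\chi) = \mathbf{F}$; in both cases $\phi \nvDash_{\mathrm{NFL}_{\mathbf{Mn}}} \chi$, completing the contrapositive.
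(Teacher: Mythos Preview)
Your proposal is correct and follows exactly the approach the paper indicates: the paper simply states that Theorems~\ref{ETLMncompleteness} and~\ref{NFLMncompleteness} are ``easy generalisations'' of the $\mathbf{M3}$ correctness and completeness theorems, and your sketch faithfully spells out that generalisation---same rules, same realisation-based argument, same greedy assignment of midpoint values, with condition~($4'$) replacing condition~(4) and $n$ midpoints replacing three. There is nothing substantively different between your route and the paper's (implicit) one.
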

\subsection{Separating $\mathbf{Mn}$ and $\mathbf{Mn+1}$}
In the previous section we have provided an easy ‘tuning’ to our tableaux. It is now natural to ask which ‘formula-formula’ consequences will be lost when transitioning from ETL- or NFL-like logic built upon $\mathbf{Mn}$ to that upon $\mathbf{Mn+1}$. Note also, that $\mathbf{Mn}$ is a submatrix of $\mathbf{Mn+1}$, so we have the following decreasing sequence of ETL- and NFL-like logics with $L_1\supseteq L_2$ (read ‘$L_1$ contains $L_2$’ or ‘$L_2$ is included into $L_1$’) meaning that each consequence valid in $L_2$ is also valid in $L_1$.
\begin{eqnarray}
\mathrm{ETL}_{\mathbf{M1}}=\mathbf{K3}\supsetneq\mathrm{ETL}_{\mathbf{M2}}=\mathrm{ETL}\supsetneq\mathrm{ETL}_{\mathbf{M3}}\supsetneq\mathrm{ETL}_{\mathbf{M4}}\supseteq\ldots\label{ETLsequence}
\\
\mathrm{NFL}_{\mathbf{M1}}=\mathrm{LP}\supsetneq\mathrm{NFL}_{\mathbf{M2}}=\mathrm{NFL}\supsetneq\mathrm{NFL}_{\mathbf{M3}}\supsetneq\mathrm{NFL}_{\mathbf{M4}}\supseteq\ldots\label{NFLsequence}
\end{eqnarray}

We know from~\eqref{ETLM3neqETLM4} that ETL-like logic built upon \textbf{M3} strictly contains \textbf{M4}. Dualisation of~\eqref{ETLM3neqETLM4} will produce a consequence valid in $\mathrm{NFL}_{\mathbf{M3}}$ but not in $\mathrm{NFL}_{\mathbf{M4}}$.

We now want to convince ourselves that all other inclusions in~\eqref{ETLsequence} and~\eqref{NFLsequence} are also strict. Indeed, a family of consequences separating $\mathbf{Mn}$ and $\mathbf{Mn+1}$ for each $\mathbf{n}$ is easy to produce. Consider the following consequences.
\begin{eqnarray}
\mathscr{D}_{n}\coloneqq \bigwedge\limits_{i=2}^{n+1}(p_1\!\vee\!p_i)\vdash\!p_1\!\vee\!\bigvee\limits_{\begin{matrix}i,j\!\in\binom{\!n\!+\!1}{2}\end{matrix}}(p_i\!\wedge\!p_j)\label{MnMn+1separation}
\end{eqnarray}

It is easy to check that for any $n>2$ $\mathscr{D}_n$ is valid in $\mathrm{ETL}_{\mathbf{Mn}}$ but is not valid in $\mathrm{ETL}_{\mathbf{Mn+1}}$ (to obtain a refuting valuation set $v(p_i)=\mathbf{i}$ with $1\leqslant i\leqslant n$). Again, by dualisation of~\eqref{MnMn+1separation}, we obtain a family of ‘formula-formula’ consequences separating $\mathrm{NFL}_{\mathbf{Mn}}$ from $\mathrm{NFL}_{\mathbf{Mn+1}}$.

Moreover, it is easy to see that the only refuting valuation for~\ref{MnMn+1separation} is some injective map from variables to the ‘middle’ values of $\mathbf{Mn+1}$. Thus, only the tableau starting with $\left\{\mathfrak{t}\!\left[\bigwedge\limits_{i=2}^{n+1}(p_1\!\vee\!p_i)\right],\mathfrak{m}\!\left[p_1\!\vee\!\!\!\!\bigvee\limits_{\scriptsize{\begin{matrix}i,j\!\in\binom{\!n\!+\!1}{2}\end{matrix}}}(p_i\!\wedge\!p_j)\right]\right\}$\footnote{On the other hand, the tableau beginning with $\left\{\mathfrak{t}\left[\bigwedge\limits_{i=2}^{n+1}(p_1\!\vee\!p_i)\right],\mathfrak{f}\left[p_1\!\vee\!\bigvee\limits_{\scriptsize{\begin{matrix}i,j\!\in\binom{\!n\!+\!1}{2}\end{matrix}}}(p_i\!\wedge\!p_j)\right]\right\}$ will be closed.} will have a branch containing $p_i\nsim p_j$ for all $1\leqslant i<j\leqslant n+1$ which will be open, while all other branches will be closed. It is also clear that since an open branch in a tableau for $\mathrm{ETL}_{\mathbf{Mn}}$ ($\mathrm{NFL}_{\mathbf{Mn}}$) is a set (and as such has no repeating entries) and can contain at most $O\left(\binom{n}{2}\right)$ pairs of variables labelled with $\nsim$ or $\sim$, such binary labels do not seriously affect the efficiency of our tableaux.

As expected, the dualisation of~\eqref{MnMn+1separation} will produce a family of consequences separating NFL-like logics on $\mathbf{Mn}$ and $\mathbf{Mn+1}$ for $n>2$.
\subsection{$\mathbf{M\omega}$}
Until now we have considered only finite $\mathbf{Mn}$ lattices and logics built upon them whose corresponding tableaux differed only in one closure rule~($4'$) which said how many formulas pairwise incomparable w.r.t.\ lattice order there can be. The next logical step is to simply abandon that rule and allow a branch of a~tableau to contain any finite number of pairwise incomparable formulas.

\begin{definition}[Tableaux for ETL- and NFL-like logics on $\mathbf{M}\omega$]
The rules and the definitions of tableaux remain from definitions~\ref{M3tableaux} and~\ref{Mntableaux}. The only change is the following one.

We say that a branch of a tableau is \textit{closed} iff it contains one of the following.
\begin{enumerate}
\item $\mathcal{M}[\phi]$ and $\mathcal{M}'[\phi]$ with $\mathcal{M}\neq\mathcal{M}'$.
\item $\phi\sim\chi$ and $\phi\nsim\chi$.
\item $\phi\nsim\phi$.
\end{enumerate}
\end{definition}

One would expect such tableaux to formalise ETL- and NFL-like logics on the following lattice which we will call $\mathbf{M}\omega$.
\[\xymatrix{
&&\top\ar@{-}[dll]\ar@{-}[dl]\ar@{-}[d]\ar@{-}[drr]\ar@{-}[dr]&&\\
\mathbf{1}&\ldots&\ldots&\mathbf{n}&\ldots\\
&&\bot\ar@{-}[ull]\ar@{-}[ul]\ar@{-}[u]\ar@{-}[urr]\ar@{-}[ur]&&
}\]
This is indeed the case as the following theorems show.
\begin{theorem}
$\phi\vDash_{\mathrm{ETL}_{\mathbf{M}\omega}}\chi$ iff there is a tableaux proof of $\phi\vdash_{\mathrm{ETL}_{\mathbf{M}\omega}}\chi$.
\end{theorem}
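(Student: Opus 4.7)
The plan is to mirror the structure of the proofs of Theorems~\ref{M3correctness} and~\ref{M3completeness}, adapting them to the fact that $\mathbf{M}\omega$ has countably many pairwise incomparable middle values while the tableau rules and labels stay exactly as in Definition~\ref{M3tableaux}.

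For the soundness direction I would verify, rule by rule, that each tableau rule is realisation-preserving: any valuation $v$ into $\mathbf{M}\omega$ that realises the premise set of a rule realises at least one of its conclusion sets. Since the rules are identical to those of Definition~\ref{M3tableaux} and each middle value of $\mathbf{M}\omega$ plays exactly the role that $\mathbf{B}$, $\mathbf{0}$, $\mathbf{N}$ play in $\mathbf{M3}$, this is essentially the same check as in the $\mathrm{ETL}_{\mathbf{M3}}$ case. The three remaining closure conditions --- contradictory labels on one formula, simultaneous $\phi\sim\chi$ and $\phi\nsim\chi$, and $\phi\nsim\phi$ --- each continue to rule out the existence of a realising valuation, so no closed branch is realisable. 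Consequently, a pair of closed tableaux starting with $\{\mathfrak{t}[\phi],\mathfrak{m}[\chi]\}$ and $\{\mathfrak{t}[\phi],\mathfrak{f}[\chi]\}$ rules out the existence of any $v$ with $v(\phi)=\mathbf{T}$ and $v(\chi)\neq\mathbf{T}$, witnessing $\phi\vDash_{\mathrm{ETL}_{\mathbf{M}\omega}}\chi$.

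For completeness I would argue by contraposition, adapting Theorem~\ref{M3completeness}. First I would observe that tableaux still terminate: the formula-decomposition rules strictly reduce syntactic complexity, and the pair-decomposition rules reduce any $\phi\sim\chi$ or $\phi\nsim\chi$ to labelled pairs of variables, so every branch of every tableau for a given sequent is finite and contains only finitely many propositional variables. From a complete open branch $\mathcal{B}$ I would then read off a valuation $v$: variables labelled $\mathfrak{t}$ or $\mathfrak{f}$ receive $\mathbf{T}$ or $\mathbf{F}$ respectively, and variables labelled $\mathfrak{m}$ receive middle values of $\mathbf{M}\omega$ chosen so as to respect the $\sim$/$\nsim$ labels. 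Using Lemma~\ref{cutadmissibility} the relation $\sim$ is consistent on the $\mathfrak{m}$-labelled variables of $\mathcal{B}$ and $\nsim$ descends to a well-defined irreflexive relation on its $\sim$-classes, so I would assign a fresh element of $\{\mathbf{1},\mathbf{2},\ldots\}$ to each class by a greedy procedure analogous to the one in the M3 proof.

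The main obstacle --- and precisely the point where access to $\omega$ middle values is essential --- is this colouring step. In the $\mathbf{Mn}$ case closure condition~($4'$) bounds the size of any $\nsim$-clique in an open branch by $n$; here no such bound exists and a complete open branch may contain arbitrarily many pairwise $\nsim$-related variables. However, since the branch is finite the induced graph on $\sim$-classes is finite, and with $\omega$ middle values available a fresh colour is always at hand, so the greedy assignment succeeds. Once $v$ realises every labelled variable and every labelled pair of variables in $\mathcal{B}$, an induction on the complexity of labelled formulas and pairs (following the treatment of the $\alpha$/$\beta$ cases and of $\circ\sim$, $\circ\nsim$ given in the proof of Theorem~\ref{M3completeness}) shows that $v$ realises all of $\mathcal{B}$, hence refutes the starting sequent, establishing the contrapositive.
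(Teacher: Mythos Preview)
Your proposal is correct and follows essentially the same route as the paper: soundness by realisation-preservation of the rules, completeness by contraposition via a greedy assignment of middle values $\mathbf{1},\mathbf{2},\ldots$ to the $\mathfrak{m}$-labelled variables of a complete open branch, exploiting that only finitely many variables occur so infinitely many middle values suffice. The paper's valuation procedure is phrased directly on variables rather than on $\sim$-classes, but this is only a cosmetic difference.
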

\begin{theorem}
$\phi\vDash_{\mathrm{NFL}_{\mathbf{M}\omega}}\chi$ iff there is a tableaux proof of $\phi\vdash_{\mathrm{NFL}_{\mathbf{M}\omega}}\chi$.
\end{theorem}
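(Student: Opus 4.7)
The plan is to mirror the soundness/completeness arguments used for $\mathrm{NFL}_{\mathbf{M3}}$ (Theorems~\ref{M3NFLcorrectness} and~\ref{M3NFLcompleteness}) and their $\mathbf{Mn}$ generalisation (Theorem~\ref{NFLMncompleteness}), adjusting only the point at which the closure condition~($4'$) previously intervened. Throughout, a valuation means a map into $\mathbf{M}\omega$, and ``realises'' has exactly the sense of Definition in \S\ref{nDETLadequacy} with the middle values now ranging over $\{\mathbf{1},\mathbf{2},\ldots\}$.

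For the soundness direction ($\Leftarrow$), I would re-check that every rule in Definition~\ref{M3tableaux} preserves realisability by an $\mathbf{M}\omega$-valuation: each $\alpha$-rule has all conclusions realised whenever the premises are, and each $\beta$-rule has at least one resulting branch of conclusions realised whenever the premises are. Because this verification is purely local and makes no reference to the size of the middle-level antichain, the case analysis already carried out for $\mathbf{M3}$ and $\mathbf{Mn}$ transfers verbatim. It then suffices to note that the three remaining closure conditions are genuinely unrealisable in $\mathbf{M}\omega$: no value is simultaneously labelled $\mathfrak{t}$, $\mathfrak{m}$ or $\mathfrak{f}$; no pair is simultaneously $\sim$ and $\nsim$; and no value is $\nsim$ to itself. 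Hence if both initial tableaux $\{\mathfrak{m}[\phi],\mathfrak{f}[\chi]\}$ and $\{\mathfrak{t}[\phi],\mathfrak{f}[\chi]\}$ close, no valuation witnesses $v(\phi)\neq\mathbf{F}$ and $v(\chi)=\mathbf{F}$, i.e.\ $\phi\vDash_{\mathrm{NFL}_{\mathbf{M}\omega}}\chi$.

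For the completeness direction ($\Rightarrow$), I would argue contrapositively: suppose at least one of the two initial tableaux has a complete open branch $\mathcal{B}$. I extract a refuting valuation $v$ from $\mathcal{B}$ as follows. On variables labelled $\mathfrak{t}$ or $\mathfrak{f}$ set $v(p)=\mathbf{T}$ or $v(p)=\mathbf{F}$ respectively; these assignments are forced. For the finitely many variables labelled $\mathfrak{m}$, the relation $\sim$, closed under the symmetry and transitivity rules of Definition~\ref{M3tableaux}, partitions them into finitely many equivalence classes $C_1,\ldots,C_k$; by Lemma~\ref{cutadmissibility} together with $\nsim\mathsf{trans}$ and the openness of $\mathcal{B}$, the relation $\nsim$ descends to a well-defined irreflexive relation on these classes. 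Choose $k$ pairwise distinct middle values $\mathbf{i_1},\ldots,\mathbf{i_k}$ of $\mathbf{M}\omega$ and set $v$ to be constantly $\mathbf{i_j}$ on $C_j$. This is where the $\mathbf{M}\omega$ setting is in fact easier than its finite counterparts: since $\mathbf{M}\omega$ supplies infinitely many pairwise incomparable middle values, the assignment is always possible regardless of $k$, which is precisely why closure condition~($4'$) may be dropped.

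The main obstacle, and the only substantive piece, is the induction on the complexity of labelled formulas and labelled pairs of formulas showing that the $v$ constructed above realises every entry of $\mathcal{B}$, not merely the labelled variables and pairs of variables. This is exactly the inductive step from the proof of Theorem~\ref{M3completeness}: for every rule one verifies that if $v$ realises the conclusions present in $\mathcal{B}$, then $v$ realises the premise, invoking completeness of $\mathcal{B}$ to ensure the relevant conclusion was generated. The verification is independent of the size of the middle-level antichain of the lattice and so transfers without change. The NFL-specific twist --- that the entailment to be refuted is $v(\phi)\neq\mathbf{F}\Rightarrow v(\chi)\neq\mathbf{F}$ rather than preservation of $\mathbf{T}$ --- is already absorbed by running the above extraction on both initial tableaux $\{\mathfrak{m}[\phi],\mathfrak{f}[\chi]\}$ and $\{\mathfrak{t}[\phi],\mathfrak{f}[\chi]\}$: an open branch in either produces a valuation $v$ with $v(\phi)\neq\mathbf{F}$ and $v(\chi)=\mathbf{F}$, refuting $\phi\vDash_{\mathrm{NFL}_{\mathbf{M}\omega}}\chi$ and completing the argument.
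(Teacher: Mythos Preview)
Your proposal is correct and follows essentially the same approach as the paper: soundness by checking that rules preserve realisability and that the three remaining closure conditions are unrealisable in $\mathbf{M}\omega$, and completeness by extracting a realising valuation from a complete open branch and then running the induction of Theorem~\ref{M3completeness}. Your valuation construction differs cosmetically from the paper's --- you partition the $\mathfrak{m}$-labelled variables into $\sim$-equivalence classes and assign each class a fresh middle value, whereas the paper uses a greedy algorithm driven by absence of $\nsim$ --- but in $\mathbf{M}\omega$ both constructions succeed for the same reason (infinitely many middle values are available), and your version is arguably the cleaner of the two.
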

The theorems can be easily proved if we construct a~realising valuation for a~complete open branch $\mathcal{B}$ as follows.
\begin{enumerate}
\item If $\mathfrak{t}[p]\in\mathcal{B}$, then $v(p)=\top$.
\item If $\mathfrak{f}[p]\in\mathcal{B}$, then $v(p)=\bot$.
\item If $\mathfrak{m}[p]\in\mathcal{B}$, we proceed as follows.
\begin{enumerate}
\item First, find all variables labelled $\mathfrak{m}$ and order them alphabetically (we will denote such variables $q_1,\ldots,q_n$).
\item Set $v(q_1)=\mathbf{1}$. Then find all $q_i$ such that $q_i\nsim q_1\notin\mathcal{B}$ and set $v(q_i)=\mathbf{1}$ for each of them.
\item Now let $q_j$ be the first variable such that $q_j\nsim q_1$ occurs in $\mathcal{B}$. Set $v(q_j)=\mathbf{2}$. Then find all $q_{j'}$ such that $q_{j'}\nsim q_j$ does not occur in $\mathcal{B}$. Set $v(q_{j'})=\mathbf{2}$ for each of them.
\item Proceed until all variables labelled $\mathfrak{m}$ are valuated with $\mathbf{1},\ldots,\mathbf{n},\ldots<\omega$.
\end{enumerate}
\end{enumerate}

Now, it is obvious that none of the consequences~\eqref{MnMn+1separation} are valid in $\mathrm{ETL}_{\mathbf{M}\omega}$. Still, there is a question to be answered: is $\mathrm{ETL}_{\mathbf{M}\omega}$ the ETL-like logic over all bounded lattices from Definition~\ref{ETLNFLlike}?

Note that not every bounded lattice can be used to construct a matrix for an ETL- or NFL-like logic. E.g., De Morgan laws and double negation cannot simultaneously hold in the following lattice. This is why we consider here only the matrices subject to conditions of Definition~\ref{ETLNFLlike}. 
\[\xymatrix{
&\top\ar@{-}[d]&\\
&a\ar@{-}[dl]\ar@{-}[dr]&\\
b&&c\\
&\bot\ar@{-}[ul]\ar@{-}[ur]&
}\]
The following theorems provide an affirmative answer to our question.
\begin{theorem}\label{ETLomega}
Let $\mathrm{ETL}_{\mathfrak{L}}$ be an ETL-like logic over bounded lattice $\mathfrak{L}$ and $\phi\nvDash_{\mathrm{ETL}_{\mathfrak{L}}}\chi$. Then $\phi\nvDash_{\mathrm{ETL}_{\mathbf{M}\omega}}\chi$.
\end{theorem}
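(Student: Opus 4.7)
Given $\phi\nvDash_{\mathrm{ETL}_{\mathfrak{L}}}\chi$ witnessed by some $v\colon\mathrm{Var}\to L$ with $v(\phi)=\top_{\mathfrak{L}}$ and $v(\chi)\neq\top_{\mathfrak{L}}$, the plan is to produce a refuting $\mathbf{M}\omega$-valuation $v'$ as the composition of $v$ with a suitably chosen $\{\top,\bot\}$-preserving homomorphism $h$ of bounded lattices with De Morgan negation, defined on the finite subset of $L$ consisting of the values $v(\sigma)$ for $\sigma$ ranging over subformulas of $\phi$ and $\chi$.

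First, I would collect $V=\{v(\sigma):\sigma\in\mathrm{subf}(\phi,\chi)\}\subseteq L$ and let $M=V\setminus\{\top_{\mathfrak{L}},\bot_{\mathfrak{L}}\}$. On $M$, define $a\approx b$ iff $a\vee b\neq\top_{\mathfrak{L}}$. Reflexivity is immediate from $a\vee a=a$ (middle); symmetry is trivial. The technical heart of the proof is a lattice-theoretic lemma for ETL-like lattices stating that $a\vee b=\top_{\mathfrak{L}}\Leftrightarrow a\wedge b=\bot_{\mathfrak{L}}$ for any two middles $a,b$. Granting this dichotomy, transitivity of $\approx$ follows, so $\approx$ partitions $M$ into classes; one also checks that the class of $a\vee b$ and the class of $a\wedge b$ (when middle) coincide with the common class of $a$ and $b$.

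Next, assign each $\approx$-class a distinct middle element of $\mathbf{M}\omega$, say $\mathbf{k}_{[m]}$ for the class of $m$, and define $h\colon V\to\{\top,\bot,\mathbf{1},\mathbf{2},\ldots\}$ by $h(\top_{\mathfrak{L}})=\top$, $h(\bot_{\mathfrak{L}})=\bot$, and $h(m)=\mathbf{k}_{[m]}$ for $m\in M$. I would then verify $h$ commutes with $\wedge$, $\vee$ and $\neg$ on subformula values. Preservation of $\vee$ is built into the definition of $\approx$: same class means identical $\mathbf{M}\omega$-value (so their join in $\mathbf{M}\omega$ is that value, matching the middle join in $\mathfrak{L}$), while different classes mean distinct $\mathbf{M}\omega$-middles (whose join is $\top$, matching $a\vee b=\top_{\mathfrak{L}}$). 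Preservation of $\wedge$ follows from the dichotomy. Preservation of $\neg$ uses condition~(2) of Definition~\ref{ETLNFLlike}: for middle $m$ one has $m\vee\neg m\neq\top_{\mathfrak{L}}$, so $m\approx\neg m$ and hence $h(\neg m)=h(m)=\neg h(m)$, matching the identity action of $\neg$ on $\mathbf{M}\omega$-middles.

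Finally, setting $v'(p)=h(v(p))$ for each variable $p$ and arguing by induction on the structure of subformulas yields $v'(\sigma)=h(v(\sigma))$ for every $\sigma\in\mathrm{subf}(\phi,\chi)$. In particular $v'(\phi)=h(\top_{\mathfrak{L}})=\top$ and $v'(\chi)=h(v(\chi))\neq\top$ since $v(\chi)\neq\top_{\mathfrak{L}}$ and $h$ is $\{\top\}$-preserving; hence $v'$ refutes $\phi\vdash\chi$ in $\mathbf{M}\omega$, as required. The main obstacle is the dichotomy lemma: ruling out exotic ETL-like configurations such as two middles joining to $\top_{\mathfrak{L}}$ while meeting in a third middle, or three middles with $m_1\vee m_2,m_2\vee m_3\neq\top_{\mathfrak{L}}$ yet $m_1\vee m_3=\top_{\mathfrak{L}}$. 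Such configurations cannot support a compatible De Morgan involution alongside the remaining stipulations of Definition~\ref{ETLNFLlike}, which is ultimately what makes the homomorphism $h$ well defined and the argument go through. The NFL-like case (Theorem~\ref{NFLomega}, should it be stated next) is then obtained by exact dualisation, swapping the roles of $\top/\bot$ and $\wedge/\vee$ throughout.
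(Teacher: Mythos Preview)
Your approach is essentially the paper's own: both construct $v'$ from $v$ by sending $\top,\bot$ to themselves and deciding whether two ``middle'' variables receive the same $\mathbf{M}\omega$-value according to whether their $\mathfrak{L}$-join is $\top$. You are simply more explicit---packaging the construction as an equivalence relation $\approx$ on middle values and a partial homomorphism $h$---and, to your credit, you isolate precisely the point the paper's sketch glosses over: well-definedness of $v'$ (equivalently, of $h$) requires that ``$a\vee b\neq\top$'' be transitive on middles and that the dichotomy $a\vee b=\top\Leftrightarrow a\wedge b=\bot$ hold.

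The gap is that your claimed resolution of this obstacle is not correct. The ``exotic configurations'' you say Definition~\ref{ETLNFLlike} excludes are in fact realised in a bona fide ETL-like lattice. Take $\mathfrak{L}=\mathbf{3}\times\mathbf{3}$ (componentwise order, so $\top=(2,2)$, $\bot=(0,0)$) with the De~Morgan involution $\neg(i,j)=(2-j,2-i)$. One checks directly that $\neg$ is an involution satisfying the De~Morgan identities, that $x\vee\neg x=\top$ forces $x\in\{\top,\bot\}$, and that the remaining clauses of Definition~\ref{ETLNFLlike} hold. Yet with $a=(2,0)$, $b=(1,1)$, $c=(0,2)$ we have $a\vee b=(2,1)\neq\top$ and $b\vee c=(1,2)\neq\top$ but $a\vee c=\top$, so $\approx$ is not transitive; with $a=(2,0)$, $e=(0,1)$ we have $a\vee e=(2,1)\neq\top$ while $a\wedge e=\bot$, so the dichotomy fails; and with $(2,1),(1,2)$ we have two middles joining to $\top$ yet meeting in the middle $(1,1)$. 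Hence neither your $h$ nor the paper's $v'$ (which imposes ``$v(p\vee q)\neq\top\Rightarrow v'(p)=v'(q)$'' and ``$v(p\vee q)=\top\Rightarrow v'(p)\neq v'(q)$'' simultaneously) is well defined for such $\mathfrak{L}$. In short, you have correctly located the crux, but the argument you offer for clearing it does not go through; the paper's own sketch leaves the very same point to the reader and is no more complete on this score.
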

\begin{proof}
Let us briefly sketch the proof.
	
It suffices to show that for any valuation $v$ in $\mathfrak{L}$ there is a valuation $v'$ in $\mathbf{M}\omega$ such that $v(\phi)=\top$ iff $v'(\phi)=\top$.

We construct $v'$ as follows.
\begin{itemize}
\item If $v(p)=\top$ or $v(p)=\bot$, then $v'(p)=\top$ or $v'(p)=\bot$, respectively.
\item Otherwise, $v'(p)\in\{\mathbf{1},\ldots,\mathbf{n},\ldots\}$ and the following conditions apply.
	\begin{itemize}
	\item If $v(p\vee q)=\top$, then $v'(p)\neq v'(q)$.
	\item If $v(p\vee q)\neq\top$, then $v'(p)=v'(q)$.
	\end{itemize}
\end{itemize}

Now we can show by induction on $\phi_1$ and $\phi_2$ that for any $\phi_1$ and $\phi_2$ $v(\phi_1\vee\phi_2)=\top$ but $v(\phi_1)\neq\top$ and $v(\phi_2)\neq\top$ iff $v'(\phi_1),v'(\phi_2)\in\{\mathbf{1},\ldots,\mathbf{n},\ldots\}$ and $v'(\phi_1)\neq v'(\phi_2)$. The details are left to the reader.

It is now easy to show by induction on $\phi$ that $v(\phi)=\top$ iff $v'(\phi)=\top$. Again, we leave it to the reader to complete the proof.
\end{proof}
\begin{theorem}\label{NFLomega}
Let $\mathrm{NFL}_{\mathfrak{L}}$ be an NFL-like logic over bounded lattice $\mathfrak{L}$ and $\phi\nvDash_{\mathrm{NFL}_{\mathfrak{L}}}\chi$. Then $\phi\nvDash_{\mathrm{NFL}_{\mathbf{M}\omega}}\chi$.
\end{theorem}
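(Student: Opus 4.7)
The plan is to reduce Theorem~\ref{NFLomega} to Theorem~\ref{ETLomega} via the contrapositive duality between ETL- and NFL-like logics that is available on any bounded lattice with De Morgan negation. First I would establish, for any matrix satisfying Definition~\ref{ETLNFLlike} and any valuation $v$, the auxiliary biconditional
\[
v(\psi)=\bot \iff v(\neg\psi)=\top,
\]
which is immediate from $v(\neg\psi)=\neg v(\psi)$ together with $\neg\top=\bot$ and $\neg\bot=\top$. Taking contrapositives in the definitions of the two entailments then gives
\[
\phi\vDash_{\mathrm{NFL}_{\mathfrak{L}}}\chi \iff \neg\chi\vDash_{\mathrm{ETL}_{\mathfrak{L}}}\neg\phi,
\]
and this duality is valid on every admissible lattice, $\mathbf{M}\omega$ included.

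With this in hand, the main argument is short. Assuming $\phi\nvDash_{\mathrm{NFL}_{\mathfrak{L}}}\chi$, the duality yields $\neg\chi\nvDash_{\mathrm{ETL}_{\mathfrak{L}}}\neg\phi$. Theorem~\ref{ETLomega} applied to the formulas $\neg\chi$ and $\neg\phi$ then supplies $\neg\chi\nvDash_{\mathrm{ETL}_{\mathbf{M}\omega}}\neg\phi$, and one further application of the duality on $\mathbf{M}\omega$ delivers $\phi\nvDash_{\mathrm{NFL}_{\mathbf{M}\omega}}\chi$, as required.

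The main obstacle I anticipate is really only bookkeeping: verifying that the $\top/\bot$-swap via negation lifts from propositional variables to arbitrary formulas. This is not deep—it follows by induction on $\phi$ using only the De~Morgan laws and double negation, both explicitly listed in Definition~\ref{ETLNFLlike}—but one does want to make the argument uniform over all admissible $\mathfrak{L}$. Should a self-contained path be preferred, an alternative plan is to dualise the construction in the proof sketch of Theorem~\ref{ETLomega}: given a refuting valuation $v$ in $\mathfrak{L}$ with $v(\phi)\neq\bot$ and $v(\chi)=\bot$, define $v'$ on $\mathbf{M}\omega$ by preserving $\top$ and $\bot$ values of variables and, for ``middle'' variables $p,q$, requiring $v'(p)=v'(q)$ iff $v(p\wedge q)\neq\bot$ and $v'(p)\neq v'(q)$ iff $v(p\wedge q)=\bot$. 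An induction on formula complexity then establishes $v(\psi)=\bot \iff v'(\psi)=\bot$, with the inductive case for $\wedge$ being the only substantive step and the case for $\neg$ being trivial because $\neg$ flips $\top$ and $\bot$ in both lattices while fixing the middle level of $\mathbf{M}\omega$. Either route finishes the proof.
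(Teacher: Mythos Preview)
Your proposal is correct. The paper's own proof is simply ``Analogously to the proof of Theorem~\ref{ETLomega}'', i.e., it re-runs the construction of a refuting $\mathbf{M}\omega$-valuation with the roles of $\top$ and $\bot$ (and of $\vee$ and $\wedge$) interchanged---exactly your second, alternative route.

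Your primary route is genuinely different: rather than redoing the inductive construction, you reduce Theorem~\ref{NFLomega} to Theorem~\ref{ETLomega} at the meta-level via the duality $\phi\vDash_{\mathrm{NFL}_{\mathfrak{L}}}\chi \iff \neg\chi\vDash_{\mathrm{ETL}_{\mathfrak{L}}}\neg\phi$. This is more economical, since neither the valuation transfer nor the induction needs to be repeated; the ETL/NFL duality does all the work. Two small remarks. First, the identities $\neg\top=\bot$ and $\neg\bot=\top$ you invoke are not stated outright in Definition~\ref{ETLNFLlike}; they do follow, but you should note why (e.g., $\neg\top\vee\neg\neg\top=\top$ together with condition~(2) forces $\neg\top\in\{\top,\bot\}$, and condition~(3) rules out $\neg\top=\top$; then double negation gives $\neg\bot=\top$). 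Second, since valuations are matrix homomorphisms, $v(\neg\psi)=\neg v(\psi)$ is immediate, so the biconditional $v(\psi)=\bot\iff v(\neg\psi)=\top$ needs no induction on $\psi$; your anticipated ``bookkeeping'' obstacle is in fact empty.
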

\begin{proof}
Analogously to the proof of Theorem~\ref{ETLomega}.
\end{proof}
\subsection{Putting tableaux into context}
Now, as we have laid out our tableaux and shown their most important properties, it is time to put our tableau systems into a~wider context. First, observe, that rules $\sim\mathsf{trans}$ and $\nsim\mathsf{trans}$ as well as $\sim\mathsf{Tr.}$ and $\nsim\mathsf{Tr.}$ bear some similarity to the so called ‘analytic cut rule’ from \textbf{KE} systems proposed and studied by Mondadori and d'Agostino in~\cite{Mondadori1988,DAgostino1990,DAgostino1992,DAgostinoMondadori1994} (cf. more recent \textbf{KE} and related systems for non-classical logics in~\cite{DAgostino1990,NetoFinger2007,CaleiroMarcos2009,CaleiroMarcosVolpe2015}) in the sense that they are crucial for completeness and thus cannot be eliminated.

Second, rules $\circ\sim$ and $\circ\nsim$ are akin to $B$-rules from $\mathbf{RE}_{\mathrm{fde}}$ system of d'Agostino (cf.~\cite[p.~105, ff.]{DAgostino1990}). Furthermore, the following statement of the subformula property obviously holds for our tableaux.
\begin{proposition}
Every closed tableau for a set $\Gamma$ contains only formulas that are subformulas of the formulas from $\Gamma$.
\end{proposition}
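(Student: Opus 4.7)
The plan is to prove the subformula property by a routine induction on the length (number of rule applications) of the closed tableau, after first making precise what ``contains a formula'' means. Specifically, I would say that a labelled formula $\mathcal{M}[\phi]$ mentions the formula $\phi$, and a labelled pair $\phi \sim \chi$ or $\phi \nsim \chi$ mentions $\phi$ and $\chi$. Writing $\mathrm{Sub}(\Gamma)$ for the set of subformulas of formulas in $\Gamma$, the target claim becomes: every formula mentioned by any node of the tableau lies in $\mathrm{Sub}(\Gamma)$.

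The base case is immediate: the root consists of the labelled formulas in $\Gamma$, and trivially $\Gamma \subseteq \mathrm{Sub}(\Gamma)$. For the inductive step, one assumes that every formula mentioned above a given node lies in $\mathrm{Sub}(\Gamma)$ and then checks, one rule at a time, that every formula mentioned in the conclusion of any rule application lies in $\mathrm{Sub}$ of the formulas mentioned in its premise(s). For the logical rules $(\mathfrak{t}\wedge), (\mathfrak{m}\wedge), (\mathfrak{f}\wedge)$, $(\mathfrak{t}\vee), (\mathfrak{m}\vee), (\mathfrak{f}\vee)$, $(\mathfrak{t}\neg), (\mathfrak{f}\neg)$ this is obvious from inspecting Definition~\ref{M3tableaux}: the conclusions only mention immediate components of the premise formula. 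The symmetry and transitivity rules $(\sim\mathsf{sym}), (\nsim\mathsf{sym}), (\sim\mathsf{trans}), (\nsim\mathsf{trans})$ permute or identify formulas already mentioned in their premises. The rules $(\neg\sim), (\neg\nsim)$ replace $\neg\phi$ by its immediate subformula $\phi$, and the rules $(\circ\sim), (\circ\nsim)$ replace $\chi_1 \circ \chi_2$ by one of its immediate components $\chi_i$; in each case the resulting formulas are subformulas of formulas already mentioned.

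The only slightly subtle rule is $(\mathfrak{m}\neg)$, whose conclusion includes the pair $\phi \sim \neg\phi$. Here one observes that both $\phi$ and $\neg\phi$ are subformulas of the premise formula $\neg\phi$ itself (which, by induction hypothesis, lies in $\mathrm{Sub}(\Gamma)$), so no new formulas are introduced outside $\mathrm{Sub}(\Gamma)$. This is really the whole content of the argument: since $\mathrm{Sub}(\Gamma)$ is closed under taking subformulas, and every rule only mentions subformulas of its premise formulas in its conclusion, the property propagates up the tableau.

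I do not expect any genuine obstacle; the proposition is a standard consequence of the purely analytic nature of the calculus, and the only check worth highlighting explicitly is the $(\mathfrak{m}\neg)$ case together with the binary-relation rules, which is why I would single them out in the write-up. The proof can therefore be kept to a short paragraph stating the induction and flagging these cases.
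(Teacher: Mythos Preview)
Your proposal is correct. The paper does not actually supply a proof of this proposition at all: it simply asserts that ``the following statement of the subformula property obviously holds for our tableaux'' and then states the proposition. Your rule-by-rule induction is precisely the routine verification that underlies this ``obviously,'' and your flagging of $(\mathfrak{m}\neg)$ as the one rule whose conclusion mentions a non-proper subformula (namely $\neg\phi$ itself) is the right place to pause, though as you note it causes no difficulty.
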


As it is known, \textbf{KE} systems provide an improvement w.r.t.\ both truth tables and ordinary analytic tableaux and are as efficient as natural deduction. Two questions thus arise: first, whether our tableaux are indeed improvement upon truth tables\footnote{Which seems rather unlikely, since our tableaux lack direct analogues of Mondadori's and d'Agostino's analytic cut, namely \[\dfrac{}{\{\mathfrak{t}[\phi]\}\mid\{\mathfrak{m}[\phi]\}\mid\{\mathfrak{f}[\phi]\}}\quad\dfrac{\{\mathfrak{m}[\phi],\mathfrak{m}[\chi]\}}{\{\phi\sim\chi\}\mid\{\phi\nsim\chi\}}\]}; second, if this is not the case, then whether it would be possible to modify our tableaux so that, on the one hand, the number of rules and labels remained constant across all ETL- and NFL-like logics based upon $\mathbf{Mn}$ lattices, and on the other hand, they would provide for improvement on truth tables and the tableaux we have presented here.

These questions, however, are outside of the scope of our paper, and we leave them for future research.

On the other hand, one of the ways to present tableaux for both finite- and infinite-valued logics is to use sets of truth values (‘constraints’) as labels (cf.~e.g.,~\cite{Haehnle1992,Haehnle1994,Haehnle1999}). Note that labels $\sim$ and $\nsim$ for pairs of formulas act as a kind of constraints. However, one of the drawbacks of constraint tableaux is that the number of possible constraints grows with the number of truth values. In our approach, we have only two constraints for any \textbf{Mn} lattice.

Finally, we note that the existence of finite analytic tableaux for $\mathbf{M}\omega$ should imply that the complexity of deciding $\vdash_{\mathrm{ETL}_{\mathbf{M}\omega}}$ and $\vdash_{\mathrm{NFL}_{\mathbf{M}\omega}}$ is in co-$\mathcal{NP}$ since we can non-deterministically guess and check a valuation falsifying $\phi\vdash_{\mathrm{ETL}_{\mathbf{M}\omega}}\chi$ ($\phi\vdash_{\mathrm{NFL}_{\mathbf{M}\omega}}\chi$, respectively) in polynomial time. It is an open question, though, whether $\mathrm{ETL}_{\mathbf{M}\omega}$ and $\mathrm{NFL}_{\mathbf{M}\omega}$ can be characterised by (some other) finite matrix. Indeed, as it is shown in~\cite[Theorem~3.17.]{CaleiroMarcelinoRivieccio2018} a logic is characterised by a single finite matrix iff it is characterised by some matrix, finitely determined, and there can be only finitely many pairwise non-equivalent under $\dashv\vdash$ formulas over a finite set of variables. We leave this question for further research.
\section{Conclusion and future work}\label{conclusion}
In this paper we have presented ETL- and NFL-like logics on \textbf{M3} lattices and considered analytic tableaux systems for them. We have also generalised our tableaux for the family of ETL- and NFL-like logics on \textbf{Mn} lattices without adding new rules or new kinds of labelled formulas.
	
First and foremost, non-distributive ETL and NFL are still relatives of first degree entailment on the one hand and on the other hand they are based upon lattices of truth values. Hence, it would be interesting to introduce modal operators for these logics as done, e.g. in~\cite{CastiglioniErtola-Biraben2017} and~\cite{Hartonas2015}.
	
Second, there are known modal extensions for FDE considered in~\cite{Priest2008,Priest2008FromIftoIs}, \cite{OmoriWansing2017}, and~\cite{OdintsovWansing2017}. It would be interesting to study modal extensions for ETL- and NFL-like logics.
	
Third, it could be fruitful to study connection of non-distributive ETL and NFL logics with modal logics as it has been recently done in~\cite{Kubyshkina2019} for first degree entailment.
	
Fourth, we have used extensively that all ‘middle-level’ values are virtually indistinguishable in the sense that they all are either designated or undesignated. Hence, it would be interesting to study non-distributive relatives of FDE where entailment relations would coincide with ‘upward’ ordering on the \textbf{Mn} lattices. This way we will need to somehow circumvent the fact that ‘middle-level’ values in those non-distributive FDE-like logics would be distinguishable by their entailment relations.

Fifth, we have shown that ETL- and NFL-like logics on $\mathbf{M}\omega$ are indeed the logics of all bounded lattices for which conditions listed in Definition~\ref{ETLNFLlike} apply. One of them is that negation of $x\in L$ can produce its complement only if $x\in\{\top,\bot\}$. This condition is present in all of \textbf{K3}, LP, ETL, NFL and FDE, so it seemed natural to include it into the list of properties of ETL- or NFL-like properties. However, one might ask, whether $\mathrm{ETL}_{\mathbf{M}\omega}$ and $\mathrm{NFL}_{\mathbf{M}\omega}$ will still be ETL- and NFL-like logics over all bounded lattices if we get rid of this condition.

Sixth, as we have noted in the previous section, our tableaux feature some superficial similarities with systems using analytic cut rule. It is reasonable to ask whether these similarities are indeed only superficial or do our tableaux provide improvement over truth table as \textbf{KE} systems do. If our tableaux are not better than truth tables, it would be interesting to construct a \textbf{KE} system which could be generalised onto a wide family of ETL- and NFL-like logics without adding new rules or labels.

Finally, although, it was relatively easy to provide an easy generalisable tableaux calculus for a wide family of ETL- and NFL-like logics, we have not presented any ‘formula-formula’ axiomatisations. The question, hence, could be raised, how to construct an elegant family of such calculi which differ from one another only in some certain axioms.
\bibliographystyle{plain}
\bibliography{references.bib}
\end{document}